\numberwithin{equation}{section}
\newtheorem {theorem} {Theorem}[section]
\newtheorem {proposition} [theorem]{Proposition}
\newtheorem {corollary} [theorem]{Corollary}
\newtheorem {definition} [theorem]{Definition}
\newtheorem {lemma}  [theorem]{Lemma}
\newcommand{\diva}{{\rm div}}
\newcommand{\supp}{{\rm supp}}
\begin{document}
\setlength{\parindent}{4ex} \setlength{\parskip}{1ex}
\setlength{\oddsidemargin}{12mm} \setlength{\evensidemargin}{9mm}

\title{{Global Existence and Stability for a Hydrodynamic System in the
Nematic Liquid Crystal Flows}\footnote{This work is supported by the
China National Natural Science Fundation under the Grant No.
10771223.}}
\author{Jihong Zhao\footnote{E-mail:
zhaojihong2007@yahoo.com.cn.},  \ Qiao Liu\footnote{E-mail:
liuqao2005@163.com.},\  and\  Shangbin Cui\footnote{E-mail:
cuisb3@yahoo.com.cn.}\\ [0.2cm] {\small Department of Mathematics,
Sun Yat-sen
University, Guangzhou, }\\
{\small Guangdong 510275, People's Republic of China}}
\date{}
\maketitle

\begin{abstract}
In this paper we consider a coupled hydrodynamical system which
involves the Navier-Stokes equations for the velocity field and
kinematic transport equations for the molecular orientation field.
By applying the Chemin-Lerner's time-space estimates for the heat
equation and the Fourier localization technique, we prove that when
initial data belongs to the critical Besov spaces with
negative-order, there exists a unique local solution, and this
solution is global when initial data is small enough. As a
corollary, we obtain existence of global self-similar solutions. In
order to figure out the relation between the solution obtained here
and weak solution of  standard sense, we establish a stability
result, which yields in a direct way that all global weak solutions
associated with the same initial data must coincide with the
solution obtained here, namely, weak-strong uniqueness holds.

{\bf Keywords:}  Liquid crystal flow; global existence; blow up;
stability; weak-strong uniqueness

{\bf Mathematics Subject Classification 2010: \,} 35A01, 35B35,
76A15
\end{abstract}

\section{\bf Introduction}

In this paper, we study the following hydrodynamical system modeling
the flow of nematic liquid crystal in $\mathbb{R}^{n}$:
\begin{align}\label{eq1.1}
  &\partial_{t} \mathbf{u}-\nu\Delta \mathbf{u} +\mathbf{u}\cdot\nabla\mathbf{u}+\nabla{P}
  =-\lambda\ \text{div }(\nabla \mathbf{d} \odot\nabla \mathbf{d}),\\
\label{eq1.2}
  &\partial_{t}  \mathbf{d}+\mathbf{u}\cdot\nabla\mathbf{d}=\gamma(\Delta \mathbf{d}-f(\mathbf{d})),\\
\label{eq1.3}
  &{\rm \text{div }}\mathbf{u}=0,\\
\label{eq1.4}
  &(\mathbf{u}, \mathbf{d})|_{t=0}=(\mathbf{u}_{0}, \mathbf{d}_{0}),
\end{align}
 where $\mathbf{u}$ and $P$ denote the velocity field and the
pressure of the flow, respectively, $\mathbf{d}$ denotes the
(averaged) macroscopic/continuum molecule orientation field,
$\nu,\lambda,\gamma$ are positive constants, and $f(\mathbf{d})$ is
a Ginzburg-Landau approximation function.  The notation $\nabla
\mathbf{d}\odot\nabla \mathbf{d}$ denotes the $n\times n$ matrix
whose $(i,j)$-th entry is given by $\partial_{i}\mathbf{d}\cdot
\partial_{j}\mathbf{d}$ ($1\leq i,j\leq n$). As in
\cite{HW10} and \cite{LL95}, we assume $f(\mathbf{d})=0$ for
simplicity. Besides, since the size of the viscosity constants
$\nu$, $\lambda$ and $\gamma$ do not play a special role in our
discussion, we assume that they all equal to the unit.

The above system \eqref{eq1.1}--\eqref{eq1.4} describes the time
evolution of nematic liquid crystal materials (cf. \cite{L89}).
Equation \eqref{eq1.1} is the conservation of linear momentum (the
force balance equation). Equation \eqref{eq1.2} is the conservation
of angular momentum, in which the left hand side represents the
kinematic transport by the flow field, while the right hand side
represents the internal relaxation due to the elastic energy.
Finally, equation \eqref{eq1.3} represents the incompressibility of
the fluid. This system was first introduced by Lin \cite{L89} as a
simplified version of the liquid crystal model proposed by Ericksen
in \cite{E61} and Leslie in \cite{L68}, and retained most of the
interesting mathematical properties of the liquid crystal model.
Some basic results concerning the mathematical theory of this system
were obtained by Lin and Liu in \cite{LL95} and \cite{LL96}. More
precisely, in \cite{LL95} they proved global existence of weak
solutions by using the modified Galerkin method combined with some
compactness argument. Moreover, they also proved global existence of
strong solutions if the initial data is sufficiently small (or if
the viscosity $\nu$ is sufficiently large). In \cite{LL96} they
proved that the one-dimensional space-time Hausdorff measure of the
singular set of ``suitable" weak solutions is zero. Recently, by
using the maximal regularity of Stokes equations and the parabolic
equations, Hu and Wang \cite{HW10} proved global existence of strong
solutions to the system \eqref{eq1.1}--\eqref{eq1.4} for initial
data belonging to Besov spaces of positive-order under the smallness
assumption. Here we also refer the reader to see \cite{E87},
\cite{HK87}, \cite{L79}, \cite{LLW10}, \cite{LW08}, \cite{LWX09},
\cite{H11}, \cite{SL09} and the references therein for more details
of the physical background of this problem and some different models
of similar equations.

As in the work of Hu and Wang \cite{HW10}, we let $F=\nabla
\mathbf{d}$. Then, taking the gradient of \eqref{eq1.2}, noticing
the facts that $F\odot F=F^{T}F$ ($F^{T}$ denotes the transpose of
$F$) and
$$
  \frac{\partial}{\partial x_{k}}\Big(\sum_{j=1}^{n}\mathbf{u}_{j}\frac{\partial \mathbf{d}_{i}}{\partial
  x_{j}}\Big)=\sum_{j=1}^{n}\frac{\partial \mathbf{u}_{j}}{\partial x_{k}}\frac{\partial \mathbf{d}_{i}}{\partial
  x_{j}}+\sum_{j=1}^{n}\mathbf{u}_{j}\frac{\partial}{\partial x_{j}}\Big(\frac{\partial \mathbf{d}_{i}}{\partial
  x_{k}}\Big)=(F\nabla \mathbf{u}+\mathbf{u}\cdot\nabla F)_{ik}
$$
for all $i, k=1,2, \cdots, n$, and applying the Leray-Hopf projector
$\mathbb{P}$ to eliminate the pressure $P$, we see that the system
\eqref{eq1.1}--\eqref{eq1.4} can be reduced into the following
system:
\begin{align}\label{eq1.5}
  &\partial_{t} \mathbf{u}-\Delta \mathbf{u}
  =-\mathbb{P}\mathbf{u}\cdot\nabla\mathbf{u}-\mathbb{P}\diva(F^{T}F),\\
\label{eq1.6}
  &\partial_{t} F-\Delta F=-\mathbf{u}\cdot\nabla F-F\nabla
  \mathbf{u},\\
\label{eq1.7}
  &(\mathbf{u},F)|_{t=0}=(\mathbf{u}_{0},F_{0}),
\end{align}
where $F_{0}=\nabla \mathbf{d}_{0}$. Recall that
$\mathbb{P}=I+\nabla(-\Delta)^{-1}\diva$, i.e., $\mathbb{P}$ is the
$n\times n$ matrix pseudo-differential operator in $\mathbb{R}^{n}$
with the symbol
$(\delta_{ij}-\frac{\xi_i\xi_j}{|\xi|^2})_{i,j=1}^{n}$, where $I$
represents the unit operator and $\delta_{ij}$ is the Kronecker
symbol. Later on we shall consider the Cauchy problem
\eqref{eq1.5}--\eqref{eq1.7}.

The purpose of this paper is to prove global existence and stability
of solutions to the problem \eqref{eq1.5}--\eqref{eq1.7} in the
critical Besov space $\dot{B}^{-1+n/p}_{p,q}(\mathbb{R}^{n})$ of
negative-order. It is easy to verify that \eqref{eq1.5} and
\eqref{eq1.6} have the same scaling property as the Navier-Stokes
equations (which are equations obtained by putting $d=0$ in
\eqref{eq1.1}--\eqref{eq1.3}), namely, if $(\mathbf{u},F)$ is a
solution of \eqref{eq1.5} and \eqref{eq1.6} with initial data
$(\mathbf{u}_{0}, F_{0})$, then for any $\delta>0$, by letting
\begin{equation}\label{eq1.8}
  \mathbf{u}_{\delta}=\delta \mathbf{u}(\delta x,\delta^{2}t), \quad F_{\delta}=\delta
  F(\delta x,\delta^{2}t),
\end{equation}
we see that $(\mathbf{u}_{\delta}, F_{\delta})$ is a solution of
\eqref{eq1.5} and \eqref{eq1.6} with initial data
$(\delta\mathbf{u}_{0}(\delta x)$, $\delta F_{0}(\delta x))$. The
so-called \textit{critical space} for the equations \eqref{eq1.5}
and \eqref{eq1.6} is a function space of $(\mathbf{u}, F)$ such that
the norm in it is invariant under the scaling \eqref{eq1.8}. The
so-called self-similar solutions are solutions satisfying the
scaling relation $\mathbf{u}(t,x)=\mathbf{u}_{\delta}(t,x)$,
$F(t,x)=F_{\delta}(t,x)$ (for all $\delta>0$, $x\in \mathbb{R}^{n}$
and $t\ge0$). Obviously, if $(\mathbf{u}, F)$ is a self-similar
solution, then we must have $$
  \mathbf{u}_0(\lambda x)=\lambda^{-1}\mathbf{u}_0(x),\ \ F_0(\lambda
  x)=\lambda^{-1}F_0(x).
$$
Such initial data do not belong to any Lebesgue  and Sobolev spaces
due to their strong singularity at $x=0$ as well as slow decay as
$|x|\to\infty$, however, they belong to some homogeneous Besov
spaces with negative order. This is the reason why we study the
problem \eqref{eq1.5}--\eqref{eq1.7} in the critical Besov space of
negative-order. By making use of Chemin-Lerner's time-space
estimates of the heat equation and the Fourier localization
technique, we shall prove that when initial data $(\mathbf{u}_{0},
F_{0})$ belongs to the critical Besov space
$\dot{B}^{-1+n/p}_{p,q}(\mathbb{R}^{n})$ for some suitable $p$ and
$q$, there exists a unique local solution, and this solution is
global when initial data is small enough. As a corollary, we get
existence of self-similar solutions; see Section 3. In order to
figure out the relation between the solution obtained here and the
weak solution studied by Lin and Liu in \cite{LL95}, we shall prove
a stability result, which yields in a direct way that all global
weak solutions with the same initial data must coincide with
solutions obtained here, which is called the weak-strong uniqueness.

Our main results are as follows (for notations we refer the reader
to see Section 2):

\begin{theorem}\label{th1.1} {\rm({Existence})}
Let $n\geq2$, $2\leq p<2n$ and $1\leq r\leq \infty$. Suppose that
$(\mathbf{u}_{0}, F_{0})\in \dot{B}^{-1+n/p}_{p,r}(\mathbb{R}^{n})$
and {\rm $\text{div } \mathbf{u}_{0}=0$}. Then there exists $T>0$
and a unique solution $(\mathbf{u}, F)$ of the Cauchy problem
\eqref{eq1.5}--\eqref{eq1.7} such that
\begin{equation*}\label{eq1.10}
  (\mathbf{u}, F)\in \underset{1<q\leq\infty}{\cap}\mathfrak{L}^{q}(0, T;
  \dot{B}^{-1+n/p+2/q}_{p,r}(\mathbb{R}^{n})).
\end{equation*}
Moreover, there exists $\varepsilon>0$ such that if
$\|(\mathbf{u}_{0},
F_{0})\|_{\dot{B}^{-1+n/p}_{p,r}}\leq\varepsilon$, then the above
assertion holds for $T=\infty$, i.e., the solution $(\mathbf{u}, F)$
is global. Furthermore, if $(\mathbf{u},F)$ and
$(\tilde{\mathbf{u}}, \tilde{F})$ are two solutions of
\eqref{eq1.5}--\eqref{eq1.7} with initial data $(\mathbf{u}_0, F_0)$
and $(\tilde{\mathbf{u}}_0, \tilde{F}_0)$, respectively, {\rm
$\text{div } \mathbf{u}_{0}=0$} and {\rm $\text{div }
\tilde{\mathbf{u}}_{0}=0$}, then there exists a constant $C>0$  such
that
\begin{equation*}
  \|(\mathbf{u}-\tilde{\mathbf{u}}, F-\tilde{F})\|_{\mathfrak{L}^{q}(0,T; \dot{B}^{-1+n/p+2/q}_{p,r})}\le
  C\|(\mathbf{u}_0-\tilde{\mathbf{u}}_0,
  F_0-\tilde{F}_0)\|_{\dot{B}^{-1+n/p}_{p,r}}.
\end{equation*}
\end{theorem}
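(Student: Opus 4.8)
The plan is to recast the Cauchy problem as a fixed-point problem and apply a contraction-mapping argument in the Chemin–Lerner spaces, exploiting the fact that the two nonlinearities are bilinear. Writing $e^{t\Delta}$ for the heat semigroup and $U_0 = (\mathbf{u}_0, F_0)$, Duhamel's formula for \eqref{eq1.5}--\eqref{eq1.7} reads
\begin{equation*}
  (\mathbf{u}, F) = e^{t\Delta} U_0 + \mathcal{B}\big((\mathbf{u},F),(\mathbf{u},F)\big),
\end{equation*}
where $\mathcal{B}$ collects the time-convolutions $\int_0^t e^{(t-s)\Delta}(\cdots)\,ds$ of the quadratic terms $\mathbb{P}(\mathbf{u}\cdot\nabla\mathbf{u})$, $\mathbb{P}\,\diva(F^TF)$, $\mathbf{u}\cdot\nabla F$ and $F\nabla\mathbf{u}$. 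I would work in the resource space $X_T = \cap_{1<q\le\infty}\mathfrak{L}^q(0,T;\dot B^{-1+n/p+2/q}_{p,r})$ appearing in the statement, equipped with a norm that is a supremum (or a finite family) of Chemin–Lerner norms, and prove two estimates: a linear bound $\|e^{t\Delta}U_0\|_{X_T}\le C\|U_0\|_{\dot B^{-1+n/p}_{p,r}}$ coming from the heat-kernel time-space estimate assumed earlier in the excerpt, and a bilinear bound $\|\mathcal{B}(V,W)\|_{X_T}\le C\|V\|_{X_T}\|W\|_{X_T}$.

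The heart of the argument is the bilinear estimate, and I expect this to be the main obstacle. The strategy is to dyadically localize via the Littlewood–Paley blocks, use the smoothing action of $e^{(t-s)\Delta}$ on the $j$-th block (which gains derivatives at the cost of an integrable factor $e^{-c\,2^{2j}(t-s)}$), and then estimate the nonlinear products with paraproduct decompositions together with the Bernstein inequalities. Because the nonlinearities all have the schematic form $\text{(first-order derivative)}\times\text{(quadratic)}$ with the derivative absorbed into the kernel smoothing, the product must be controlled in $L^{p/2}$-type spaces, which forces the restriction $p<2n$ (so that $\dot B^{-1+n/p}_{p,r}$ is a genuine product-stable scale and the exponent $-1+n/p+2/q$ stays in the admissible range). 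The operators $\mathbb{P}$ and $\diva$ are Fourier multipliers of order $0$ and $1$ respectively and act boundedly on each dyadic block, so they cause no essential difficulty; one only needs to track how $\diva$ interacts with the derivative gain from the heat kernel. Summing the dyadic estimates in $\ell^r$ via Young's inequality in the time variable (which is where the Chemin–Lerner formulation, rather than the classical Besov-in-time norm, is essential) yields the desired quadratic bound with a constant uniform in $T$.

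With the two estimates in hand, the standard fixed-point lemma for quadratic maps (if $\|e^{t\Delta}U_0\|_{X_T}\le\eta$ with $4C\eta<1$, the map $V\mapsto e^{t\Delta}U_0 + \mathcal{B}(V,V)$ has a unique fixed point in a ball of $X_T$) gives existence and uniqueness at once. For local existence one makes the linear part small by choosing $T$ small: since $U_0\in\dot B^{-1+n/p}_{p,r}$ with $r<\infty$, the quantity $\|e^{t\Delta}U_0\|_{X_T}$ tends to $0$ as $T\to0$ by dominated convergence on the dyadic sum; for $r=\infty$ one restricts to the closure of smooth functions or argues by density, which is a routine technical point. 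For the global statement one simply notes that the full-line norm $\|e^{t\Delta}U_0\|_{X_\infty}\le C\|U_0\|_{\dot B^{-1+n/p}_{p,r}}\le C\varepsilon$, so smallness of the data directly makes the linear part small and the contraction runs on $(0,\infty)$. Finally, the stability (Lipschitz) estimate follows from the same bilinear bound applied to the difference: subtracting the two Duhamel formulas gives
\begin{equation*}
  (\mathbf{u}-\tilde{\mathbf{u}}, F-\tilde F) = e^{t\Delta}(U_0-\tilde U_0) + \mathcal{B}(V,V-\tilde V) + \mathcal{B}(V-\tilde V,\tilde V),
\end{equation*}
and using $\max(\|V\|_{X_T},\|\tilde V\|_{X_T})\le 2\eta$ with $4C\eta<1$ lets one absorb the quadratic terms into the left-hand side, leaving the linear factor $C\|U_0-\tilde U_0\|_{\dot B^{-1+n/p}_{p,r}}$, which is exactly the asserted bound.
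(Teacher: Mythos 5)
Your existence argument is essentially the paper's: Duhamel's formula plus the quadratic fixed-point lemma (Proposition \ref{pro3.2}) in Chemin--Lerner spaces, with the linear bound from the heat estimate (Proposition \ref{pro3.1}) and the bilinear bound from product laws (Lemma \ref{le2.5}, applied with $s_{1}=-1+n/p+2/q_{1}$, $s_{2}=-2+n/p+2/q_{1}$; the condition $s_{1}+s_{2}>0$ is exactly $\frac{n}{p}+\frac{2}{q_{1}}>\frac32$, hence $p<2n$). One imprecision: for the term $F\nabla\mathbf{u}$ the derivative sits inside the product and cannot be written in divergence form, so it is \emph{not} absorbed by heat-kernel smoothing; it is handled by the product law with one factor at negative regularity, which is precisely why this term dictates the restriction $p<2n$ (cf.\ Remark 1.1(ii)). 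Your route to smallness of the linear part for large data ($\|e^{t\Delta}U_{0}\|_{X_{T}}\to0$ as $T\to0$ by dominated convergence when $r<\infty$) is a legitimate variant of the paper's Fourier splitting (high frequencies small in norm by choosing the cut-off $2^{N}$ large, low frequencies small by Bernstein's inequality and $T$ small as in \eqref{eq3.14}); both treatments are equally delicate at $r=\infty$.

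The genuine gap is in uniqueness and the final Lipschitz estimate. The fixed-point lemma yields uniqueness only among solutions with $\|(\mathbf{u},F)\|_{\mathcal{X}_{T}}\le2\varepsilon$, and your difference argument requires $4C\max(\|V\|_{X_{T}},\|\tilde V\|_{X_{T}})<1$, i.e.\ both solutions inside the contraction ball. But the theorem asserts uniqueness in the whole class $\cap_{q}\mathfrak{L}^{q}(0,T;\dot{B}^{-1+n/p+2/q}_{p,r})$ and a stability bound for arbitrary pairs of solutions in that class, with no smallness restriction on their norms. This is exactly the point the paper flags (``in Proposition \ref{pro3.2} we obtained only a partial answer to the uniqueness problem'') and then repairs: writing the system satisfied by $(\mathbf{w},E)=(\mathbf{u}-\tilde{\mathbf{u}},F-\tilde F)$ and using the same product estimates gives
\begin{equation*}
  \|(\mathbf{w},E)\|_{\mathcal{X}_{T}}\le C_{1}\|(\mathbf{w}_{0},E_{0})\|_{\dot{B}^{-1+n/p}_{p,r}}
  +M(T)\,\|(\mathbf{w},E)\|_{\mathcal{X}_{T}},
\end{equation*}
where $M(T)=C_{0}\big(\|\mathbf{u}\|_{\mathcal{X}_{T}}+\|\tilde{\mathbf{u}}\|_{\mathcal{X}_{T}}+\|F\|_{\mathcal{X}_{T}}+\|\tilde F\|_{\mathcal{X}_{T}}\big)$ is continuous, nondecreasing, and vanishes as $T\to0$ (dominated convergence again). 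Choosing $T_{1}$ with $M(T_{1})\le\frac12$ lets one absorb the last term on $[0,T_{1})$, and iterating the argument on $[T_{1},2T_{1})$, $[2T_{1},3T_{1})$, \dots\ covers all of $[0,T)$ and produces \eqref{eq3.16}. Without this absorb-and-iterate step (or some substitute), your proposal proves a strictly weaker statement than Theorem \ref{th1.1}: existence, but uniqueness and stability only within the small ball.
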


\noindent\textit{Remark 1.1.} (i) If $(\mathbf{u}_{0}, F_{0})$
belongs to the closure of $\mathcal{S}(\mathbb{R}^{n})$ in
$\dot{B}^{-1+n/p}_{p,q}(\mathbb{R}^{n})$, we actually have
$(\mathbf{u}, F)\in
C([0,T],\dot{B}^{-1+n/p}_{p,q}(\mathbb{R}^{n}))$.

(ii) Since the appearance of the term $F\nabla\mathbf{u}$ in
\eqref{eq1.6}, we need the condition $2\leq p<2n$. For the
Navier-Stokes equations, similar results hold for all $2\leq
p<\infty$.

Now by the standard procedure, based on the uniqueness of solutions,
allows us to deduce the existence of self-similar solutions of
\eqref{eq1.5}--\eqref{eq1.7}.

\begin{corollary}\label{co1.2}
Let $n\geq2$, $n\leq p<2n$. Suppose that $(\mathbf{u}_{0}, F_{0})\in
\dot{B}^{-1+n/p}_{p,\infty}(\mathbb{R}^{n})$ and {\rm $\text{div }
\mathbf{u}_{0}=0$}, and furthermore, $\mathbf{u}_{0}$ and $F_{0}$
are homogeneous functions with degree $-1$, i.e.,
\begin{equation*}
  \mathbf{u}_{0}(x)=\delta\mathbf{u}_{0}(\delta x), \quad F_{0}(x)
  =\delta F_{0}(\delta x).
\end{equation*}
Then the global solution $(\mathbf{u}, F)$ constructed in Theorem
\ref{th1.1} is a self-similar solution.
\end{corollary}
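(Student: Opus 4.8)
The plan is to deduce self-similarity purely from the uniqueness assertion of Theorem \ref{th1.1}, combined with the scaling invariance \eqref{eq1.8}. Let $(\mathbf{u}, F)$ be the global solution furnished by Theorem \ref{th1.1} (the data being small enough in the scale-invariant norm, as is needed for that global solution to exist). For a fixed $\delta>0$ I would introduce the rescaled pair
\[
  \mathbf{u}_\delta=\delta\,\mathbf{u}(\delta x,\delta^2 t),\qquad
  F_\delta=\delta\, F(\delta x,\delta^2 t),
\]
which by \eqref{eq1.8} is again a solution of \eqref{eq1.5}--\eqref{eq1.6} on $(0,\infty)$, now with initial data $(\delta\mathbf{u}_0(\delta\,\cdot\,),\delta F_0(\delta\,\cdot\,))$. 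The homogeneity hypothesis $\mathbf{u}_0(x)=\delta\mathbf{u}_0(\delta x)$ and $F_0(x)=\delta F_0(\delta x)$ says exactly that this rescaled datum coincides with $(\mathbf{u}_0,F_0)$. Thus $(\mathbf{u},F)$ and $(\mathbf{u}_\delta,F_\delta)$ are two solutions launched from the \emph{same} initial value.

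The crucial step is to verify that $(\mathbf{u}_\delta,F_\delta)$ lives in the same solution space $\mathfrak{L}^q(0,\infty;\dot{B}^{-1+n/p+2/q}_{p,\infty}(\mathbb{R}^n))$ as $(\mathbf{u},F)$, so that the uniqueness part of Theorem \ref{th1.1} may be invoked. This reduces to the scale-invariance of the Chemin--Lerner norm under \eqref{eq1.8}, which I would check at the level of dyadic blocks: the spatial dilation $x\mapsto\delta x$ shifts the Littlewood--Paley index by $\log_2\delta$ and contributes a factor $\delta^{-n/p}$ in $L^p_x$; the time dilation $t\mapsto\delta^2 t$ on the half-line $(0,\infty)$ contributes $\delta^{-2/q}$ in $L^q_t$; and the prefactor $\delta$ contributes $\delta^{+1}$. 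Collecting these powers of $\delta$ against the weight $2^{j(-1+n/p+2/q)}$ and absorbing the index shift into the $\ell^\infty_j$-summation, every power of $\delta$ cancels precisely because $s=-1+n/p+2/q$ satisfies $1+s-n/p-2/q=0$. Hence
\[
  \|(\mathbf{u}_\delta,F_\delta)\|_{\mathfrak{L}^q(0,\infty;\dot{B}^{-1+n/p+2/q}_{p,\infty})}
  =\|(\mathbf{u},F)\|_{\mathfrak{L}^q(0,\infty;\dot{B}^{-1+n/p+2/q}_{p,\infty})}
\]
for every $\delta>0$, so the rescaled solution indeed belongs to the uniqueness class, uniformly in $\delta$.

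With both solutions in the same space and sharing the same (small) initial datum, the uniqueness in Theorem \ref{th1.1} forces $\mathbf{u}_\delta=\mathbf{u}$ and $F_\delta=F$ for every $\delta>0$, which is precisely the self-similarity relation $\mathbf{u}(t,x)=\mathbf{u}_\delta(t,x)$, $F(t,x)=F_\delta(t,x)$. The only genuinely delicate point is the one just flagged, namely confirming that the rescaled pair falls within the uniqueness class of Theorem \ref{th1.1} \emph{uniformly in} $\delta$; this is exactly the scale-invariance computation above, and once it is in hand there is no further analytic difficulty. I would also remark that the choice $r=\infty$ in the Besov index is unavoidable here: the homogeneous data of degree $-1$ is singular at the origin and decays slowly at infinity, so it does not lie in the closure of $\mathcal{S}(\mathbb{R}^n)$, and one works in the Chemin--Lerner space rather than in $C([0,T];\dot{B}^{-1+n/p}_{p,\infty})$.
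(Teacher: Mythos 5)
Your proposal is correct and follows exactly the route the paper intends: the paper disposes of Corollary \ref{co1.2} with the one-line remark that it follows ``by the standard procedure, based on the uniqueness of solutions,'' and your argument --- rescale via \eqref{eq1.8}, use homogeneity of the data to see both solutions share the same initial value, check scale-invariance of the Chemin--Lerner norm so the rescaled solution lies in the uniqueness class, and conclude $\mathbf{u}_\delta=\mathbf{u}$, $F_\delta=F$ --- is precisely that standard procedure, spelled out in full. The only cosmetic imprecision is your claim of exact norm equality under rescaling (for non-dyadic $\delta$ one gets equivalence with constants uniform in $\delta$, due to the frequency shift not landing on integer indices), but since only membership in the uniqueness class is needed, this does not affect the argument.
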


\begin{theorem}\label{th1.3} {\rm({Blow-up criterion})}
Under the hypotheses of Theorem \ref{th1.1}, we denote by $T^{*}$
the maximum existence time. If $T^{*}<\infty$, then for any $2\leq
p<2n$ and $2<q<\infty$ satisfying
$\frac{n}{p}+\frac{2}{q}>\frac{3}{2}$, we have
\begin{equation*}
  \|(\mathbf{u}, F)\|_{L^{q}(0,T^{*}; \dot{B}^{-1+n/p+2/q}_{p,q})}=\infty.
\end{equation*}
\end{theorem}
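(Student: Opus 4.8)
The plan is to argue by contradiction in the spirit of the usual scaling-invariant (Serrin-type) continuation criterion. Assume $T^*<\infty$ yet $A:=\|(\mathbf{u},F)\|_{L^{q}(0,T^*;\dot{B}^{s}_{p,q})}<\infty$, where $s=-1+n/p+2/q$. A direct scaling check shows this space-time quantity is invariant under \eqref{eq1.8}, so morally it sits at the same level as the critical norm and should control it. The goal is therefore to upgrade the finiteness of $A$ to finiteness of the full Chemin--Lerner solution norm on $[0,T^*)$, and then to restart the fixed-point scheme of Theorem \ref{th1.1} to continue $(\mathbf{u},F)$ strictly past $T^*$, contradicting the maximality of $T^*$. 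Throughout I would work from the Duhamel (mild) formulation of \eqref{eq1.5}--\eqref{eq1.6} and the Fourier-localization machinery already used to prove Theorem \ref{th1.1}.

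First I would apply $\dot{\Delta}_j$ to \eqref{eq1.5}--\eqref{eq1.6}, use the Chemin--Lerner heat-semigroup estimate block by block, and control the four quadratic nonlinearities $\mathbb{P}(\mathbf{u}\cdot\nabla\mathbf{u})$, $\mathbb{P}\,\diva(F^{T}F)$, $\mathbf{u}\cdot\nabla F$ and $F\nabla\mathbf{u}$ by product and paraproduct estimates in homogeneous Besov spaces. The mechanism is standard: each quadratic term is placed in a Besov space one derivative below the data, so that the two-derivative gain of the heat kernel (minus the one derivative carried by the nonlinearity) returns it to the critical level. The hypothesis $n/p+2/q>3/2$, i.e. $s>1/2$, is exactly the regularity needed for the product estimates to hold with both factors measured in $\dot{B}^{s}_{p,q}$, and the Hölder/Young bookkeeping in time (two factors in $L^{q}_t$ give $L^{q/2}_t$, convolved against the kernel $e^{-c2^{2j}(t-s)}$) is admissible precisely in the range $2<q<\infty$, $n/p+2/q>3/2$. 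Summing the dyadic blocks in $\ell^{r}$ with the weight $2^{(-1+n/p)j}$ then yields $(\mathbf{u},F)\in\mathfrak{L}^{\infty}(0,T^*;\dot{B}^{-1+n/p}_{p,r})$ and, more generally, finiteness of the Chemin--Lerner solution norm up to $T^*$, all bounded in terms of $A$ and the initial data.

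With the critical norm controlled on $[0,T^*)$, I would choose $t_{0}<T^*$ so close to $T^*$ that the tail $\|(\mathbf{u},F)\|_{L^{q}(t_{0},T^*;\dot{B}^{s}_{p,q})}$ drops below the smallness threshold $\varepsilon$ of Theorem \ref{th1.1}. Taking $(\mathbf{u}(t_{0}),F(t_{0}))$ as fresh initial data, the linear heat evolution is controlled on a slightly larger interval, while the nonlinear Duhamel contribution over $[t_{0},T^*)$ remains small because the bilinear form is governed by this small tail. The contraction argument of Theorem \ref{th1.1} then furnishes a solution on $[t_{0},T^*+\eta)$ for some $\eta>0$, which by uniqueness coincides with $(\mathbf{u},F)$ and extends it beyond $T^*$ — the desired contradiction.

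The hard part will be the bilinear estimate for the term $F\nabla\mathbf{u}$ together with the sharp time-integrability bookkeeping: one must verify that each product lands in exactly the right negative-order Besov space and that the exponents produced by Young's inequality in time are admissible precisely under $n/p+2/q>3/2$ and $2<q<\infty$, so that the estimate reads $\lesssim A\,\|(\mathbf{u},F)\|_{\mathfrak{L}}$ and can be absorbed. A second, characteristically critical, subtlety is that the local existence time is not controlled by the critical norm alone; hence the continuation cannot rely on a uniform lower bound for the lifespan, and must instead exploit the smallness of the scaling-invariant space-time tail near $T^*$ — which is exactly the information that finiteness of $A$ supplies.
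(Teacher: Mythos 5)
Your overall skeleton --- finiteness of the critical space--time norm yields an a priori bound $\|(\mathbf{u},F)\|_{\mathfrak{L}^{\infty}(0,T^{*};\dot{B}^{-1+n/p}_{p,r})}\leq M$ via the fixed-point estimates (with $n/p+2/q>3/2$ entering exactly through the product estimates, as you say), after which one restarts the contraction scheme and invokes uniqueness to continue past $T^{*}$ --- is the same as the paper's. Where you genuinely diverge is the continuation mechanism. The paper never uses the smallness of the space--time tail: instead it splits $\widehat{\mathbf{u}}(\xi,t)=\widehat{\mathbf{u}}1_{\{|\xi|>2^{N}\}}+\widehat{\mathbf{u}}1_{\{|\xi|\leq 2^{N}\}}$ at \emph{every} $t\in[0,T)$, notes that the Chemin--Lerner bound (with the $\ell^{r}$ sum outside the $L^{\infty}_{t}$ norm) makes the high-frequency part small in $\dot{B}^{-1+n/p}_{p,r}$ \emph{uniformly in $t$}, and bounds the low-frequency free evolution by $C_{2}2^{2N/q}(\tilde T-t)^{1/q}M$ via Bernstein's inequality; this produces a lifespan $T_{\varepsilon}$ depending only on $\varepsilon$ and $M$ from every time $t<T$, hence extension to $[0,T+T_{\varepsilon})$. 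So your closing assertion that ``the continuation cannot rely on a uniform lower bound for the lifespan'' is incorrect in this setting: such a bound is indeed unavailable from the critical norm of \emph{arbitrary} data, but along the fixed trajectory the $\mathfrak{L}^{\infty}$ structure supplies precisely the uniform high-frequency tightness that restores it, and this is the paper's key observation.

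Your alternative (a single restart at $t_{0}$ close to $T^{*}$ using the small tail) is a legitimate variant, but as written it has a gap at its pivot: to apply Proposition \ref{pro3.2} on $[t_{0},T^{*}+\eta)$ you need \emph{smallness}, not mere ``control,'' of $\|e^{(t-t_{0})\Delta}(\mathbf{u}(t_{0}),F(t_{0}))\|_{\mathfrak{L}^{q}(t_{0},T^{*}+\eta;\dot{B}^{-1+n/p+2/q}_{p,q})}$, and the bound $\|(\mathbf{u}(t_{0}),F(t_{0}))\|_{\dot{B}^{-1+n/p}_{p,r}}\leq M$ gives only an $O(M)$ estimate, which the contraction cannot absorb. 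The repair is standard and worth stating: on $[t_{0},T^{*})$ write the free evolution as the solution minus the Duhamel bilinear integral, so that (using $\mathfrak{L}^{q}=L^{q}$ when the third Besov index equals $q$) its norm is at most $\varepsilon+C_{0}\varepsilon^{2}$ by the small tail and the bilinear estimate; on $[T^{*},T^{*}+\eta)$ each dyadic block's $L^{q}(L^{p})$ norm tends to zero as $\eta\to 0$, and dominated convergence in the dyadic sum (here $q<\infty$ is essential) makes that piece at most $\varepsilon$ for $\eta$ small. With this step supplied, your argument closes and gives the theorem by a route different from, but comparable in length to, the paper's; the paper's uniform-splitting argument buys the stronger intermediate fact of a uniform lifespan along the trajectory, while yours isolates more transparently the role of the scaling-invariant tail.
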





For initial data $(\mathbf{u}_{0}, F_{0})\in
\dot{B}^{-1+n/p}_{p,q}(\mathbb{R}^{n})$, by Theorem \ref{th1.1},
there exists $T>0$ such that the system \eqref{eq1.5}--\eqref{eq1.7}
has a unique solution $(\mathbf{u}, F)\in
\cap_{1<q\leq\infty}\mathfrak{L}^{q}(0,T;
\dot{B}^{-1+n/p+2/q}_{p,r}(\mathbb{R}^{n}))$. If
$(\mathbf{u}_{0},F_{0})$ is additionally in the space
$L^{2}(\mathbb{R}^{n})$, then it is not difficult to see that
$(\mathbf{u}, F)$ is also a weak solution (in the standard sense,
see \cite{LL95}).  A natural question is the following: Do all weak
solutions coincide with the one we obtained in Theorem \ref{th1.1}?
In order to answer this question, we establish the following
stability theorem.

\begin{theorem}\label{th1.4} {\rm({Stability})}
Let $n\geq 2$, $2\leq p<\infty$, $2<q<\infty$ and
$\frac{n}{p}+\frac{2}{q}>1$. Assume that $(\mathbf{u}_{0}, F_{0})$
and $(\tilde{\mathbf{u}}_{0}, \tilde{F}_{0})$ be two vector fields
in $L^{2}(\mathbb{R}^{n})$ such that {\rm $\text{div }
\mathbf{u}_{0}=0$} and {\rm $\text{div } \tilde{\mathbf{u}}_{0}=0$},
and $(\tilde{\mathbf{u}}, \tilde{F})$ and $(\mathbf{u}, F)$ be two
weak solutions associated with initial data
$(\tilde{\mathbf{u}}_{0}, \tilde{F}_{0})$ and $(\mathbf{u}_{0},
F_{0})$, respectively. Let
$\mathbf{w}=\mathbf{u}-\tilde{\mathbf{u}}$, $E=F-\tilde{F}$. If we
assume further that $(\mathbf{u}, F)\in L^{q}(0,T;
\dot{B}^{-1+n/p+2/q}_{p,q}(\mathbb{R}^{n}))$, then  for any $0<
t\leq T$,
\begin{equation*}
  \|(\mathbf{w}, E)\|_{L^{2}}^{2}+2\int^{t}_{0}\|(\nabla\mathbf{w}, \nabla E)\|_{L^{2}}^{2}d\tau
  \leq \|(\mathbf{w}_{0}, E_{0})\|_{L^{2}}^{2}\times\exp\Big(C\int^{t}_{0}\|(\mathbf{u},
  F)\|_{\dot{B}^{-1+n/p+2/q}_{p,q}}^{q}d\tau\Big),
\end{equation*}
where $\mathbf{w}_{0}=\mathbf{u}_{0}-\tilde{\mathbf{u}}_{0}$ and
$E_{0}=F_{0}-\tilde{F}_{0}$, and $C>0$ is a constant.
\end{theorem}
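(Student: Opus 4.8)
The plan is to run an $L^{2}$ energy estimate on the difference $(\mathbf{w},E)=(\mathbf{u}-\tilde{\mathbf{u}},F-\tilde{F})$ and to close it by Grönwall's inequality, with the reference norm $\|(\mathbf{u},F)\|_{\dot{B}^{\sigma}_{p,q}}$, $\sigma=-1+n/p+2/q$, playing the role of the Serrin-type multiplier. Subtracting the two copies of \eqref{eq1.5}--\eqref{eq1.6} gives
\begin{align*}
&\partial_t\mathbf{w}-\Delta\mathbf{w}=-\mathbb{P}(\mathbf{w}\cdot\nabla\mathbf{u}+\tilde{\mathbf{u}}\cdot\nabla\mathbf{w})-\mathbb{P}\diva(F^{T}F-\tilde{F}^{T}\tilde{F}),\\
&\partial_t E-\Delta E=-(\mathbf{w}\cdot\nabla F+\tilde{\mathbf{u}}\cdot\nabla E)-(F\nabla\mathbf{u}-\tilde{F}\nabla\tilde{\mathbf{u}}),
\end{align*}
where the convective terms are split so that transport is carried by a divergence-free field. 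I would pair the first equation with $\mathbf{w}$ and the second (in the Frobenius sense) with $E$, add, and integrate by parts in the Laplacians to produce the dissipation $\|(\nabla\mathbf{w},\nabla E)\|_{L^{2}}^{2}$. Since $\diva\mathbf{w}=\diva\tilde{\mathbf{u}}=0$, the terms $\int(\tilde{\mathbf{u}}\cdot\nabla\mathbf{w})\cdot\mathbf{w}$ and $\int(\tilde{\mathbf{u}}\cdot\nabla E):E$ vanish and $\mathbb{P}$ may be dropped against $\mathbf{w}$.

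The decisive algebraic point is the cancellation between the stress term in the $\mathbf{w}$-equation and the coupling term $F\nabla\mathbf{u}-\tilde{F}\nabla\tilde{\mathbf{u}}$ in the $E$-equation. Using the pointwise identity $\int(A^{T}B):\nabla\mathbf{v}\,dx=\int(A\nabla\mathbf{v}):B\,dx$ (with $(\nabla\mathbf{v})_{jk}=\partial_k v_j$) and expanding $F=\tilde{F}+E$, $\mathbf{u}=\tilde{\mathbf{u}}+\mathbf{w}$, the two contributions combine to $\int(E\nabla\mathbf{w}):F-\int(E\nabla\mathbf{u}):E$, which mirrors the exact cancellation that conserves the basic energy of the system. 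After this step, and after one integration by parts in the remaining convective terms (again using $\diva\mathbf{w}=0$, e.g. $\int(\mathbf{w}\cdot\nabla\mathbf{u})\cdot\mathbf{w}=-\int(\mathbf{w}\cdot\nabla\mathbf{w})\cdot\mathbf{u}$, $\int(\mathbf{w}\cdot\nabla F):E=-\int(\mathbf{w}\cdot\nabla E):F$, and an integration by parts in $\int(E\nabla\mathbf{u}):E$ to take the derivative off $\mathbf{u}$), every surviving nonlinear term is reduced to the schematic trilinear form $\int g\,v\,\nabla v'\,dx$ with $g\in\{\mathbf{u},F\}$ and $v,v'\in\{\mathbf{w},E\}$. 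Thus exactly one factor of dissipation appears and the reference solution always carries the undifferentiated factor.

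The heart of the matter is then the single inequality
$$\Big|\int g\,v\,\nabla v'\,dx\Big|\le \tfrac{1}{16}\|(\nabla\mathbf{w},\nabla E)\|_{L^{2}}^{2}+C\|g\|_{\dot{B}^{\sigma}_{p,q}}^{q}\|(\mathbf{w},E)\|_{L^{2}}^{2}.$$
To prove it I would use Hölder with exponents $(d,b,2)$, $\tfrac1d+\tfrac1b=\tfrac12$, $d=\tfrac{nq}{q-2}$, then Gagliardo--Nirenberg $\|v\|_{L^{b}}\lesssim\|v\|_{L^{2}}^{2/q}\|\nabla v\|_{L^{2}}^{1-2/q}$ (the exponents being admissible precisely because $2<q<\infty$ and $2\le p<\infty$), so the power of the difference-gradient is $2-2/q<2$. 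For $\|g\|_{L^{d}}$ I would invoke $\dot{B}^{\sigma}_{p,q}\hookrightarrow\dot{B}^{0}_{d,1}\hookrightarrow L^{d}$: the scaling $\sigma-n/p=-n/d$ holds by the choice of $d$, and since $\sigma>0$ (this is the hypothesis $n/p+2/q>1$) one has $p<d$ strictly, so the gain of integrability renders the third Besov index irrelevant and the embedding is legitimate even though $q>2$. A final application of Young's inequality with conjugate exponents $\big(\tfrac{q}{q-1},q\big)$ separates the subcritical power $\|\nabla v\|_{L^{2}}^{2-2/q}$ (absorbed into the dissipation) from $\|g\|_{\dot{B}^{\sigma}_{p,q}}\|v\|_{L^{2}}^{2/q}$ (raised to the power $q$). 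Summing over the finitely many terms yields
$$\tfrac{d}{dt}\|(\mathbf{w},E)\|_{L^{2}}^{2}+2\|(\nabla\mathbf{w},\nabla E)\|_{L^{2}}^{2}\le C\|(\mathbf{u},F)\|_{\dot{B}^{\sigma}_{p,q}}^{q}\|(\mathbf{w},E)\|_{L^{2}}^{2},$$
and integrating in time followed by Grönwall's lemma gives the stated estimate, the exponential being finite thanks to $(\mathbf{u},F)\in L^{q}(0,T;\dot{B}^{\sigma}_{p,q})$.

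I expect the main obstacle to be twofold. First, the product estimate above is the crux: one must arrange the splittings and integrations by parts so that $(\mathbf{u},F)$ is never differentiated and so that the companion power of $\|(\nabla\mathbf{w},\nabla E)\|_{L^{2}}$ stays strictly below $2$; the condition $\tfrac{n}{p}+\tfrac{2}{q}>1$ is exactly what makes this possible, since it forces $p<d$ (hence the admissible embedding) and fixes the Young exponent as $q$. Second, because $(\tilde{\mathbf{u}},\tilde{F})$ is only a weak solution, the formal energy computation needs rigorous justification: this is the standard weak--strong mechanism, in which one uses the energy inequality satisfied by the weak solution together with the extra regularity $(\mathbf{u},F)\in L^{q}(0,T;\dot{B}^{\sigma}_{p,q})$ to give meaning to, and control, the cross terms. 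This is why the conclusion is an inequality rather than an identity, and why the additional integrability of $(\mathbf{u},F)$ is indispensable.
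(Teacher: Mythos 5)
Your overall skeleton---reduce everything to trilinear terms of the form $\int g\,v\,\nabla v'\,dx$ with $g\in\{\mathbf{u},F\}$ and $v,v'\in\{\mathbf{w},E\}$, then absorb the dissipation by Young and close with Gr\"onwall---is the same as the paper's final step \eqref{eq5.26}--\eqref{eq5.31}, but your proof of the crux inequality fails at exactly the delicate point. You bound $\|g\|_{L^{d}}$, $d=\tfrac{nq}{q-2}$, through the chain $\dot{B}^{\sigma}_{p,q}\hookrightarrow\dot{B}^{0}_{d,1}\hookrightarrow L^{d}$, claiming that the strict gain of integrability $p<d$ ``renders the third Besov index irrelevant.'' This is false: since $\sigma-\tfrac{n}{p}=-\tfrac{n}{d}$, the embedding is scaling-critical, and scaling-critical embeddings between homogeneous Besov spaces can never \emph{decrease} the third index (irrelevance of that index is a feature of embeddings with a strict gap in differential dimension, which the scaling here forbids). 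What is true is only $\dot{B}^{\sigma}_{p,q}\hookrightarrow\dot{B}^{0}_{d,q}$, and $\dot{B}^{0}_{d,q}\not\hookrightarrow L^{d}$ when $q>2$ (recall $L^{d}=\dot{F}^{0}_{d,2}$, so one needs $q\le\min(d,2)=2$). Since the theorem lives precisely in the regime $2<q<\infty$, your argument collapses in every admissible case. This is exactly why the paper does not argue by H\"older plus embedding but imports the Gallagher--Planchon trilinear lemma (Lemma \ref{le5.4}), whose proof goes through Bony's paraproduct decomposition; the three-term structure on the right of \eqref{eq5.5} is the trace of that splitting. Repairing your proof amounts to reproving that lemma.

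The second gap is the one you flag yourself and then set aside. The computation you actually carry out---subtract the two systems and pair with $(\mathbf{w},E)$---is not licit for weak solutions: $\partial_{t}\mathbf{w}$ is only a distribution, $(\mathbf{w},E)\in L^{\infty}L^{2}\cap L^{2}\dot{H}^{1}$ is not an admissible test function, and the two solutions are only known to satisfy energy \emph{inequalities}, so neither the equations nor the energies can simply be ``subtracted.'' In the paper this is not a remark but the bulk of the proof: one expands $\|(\mathbf{w},E)\|_{L^{2}}^{2}$ into the two energies plus cross terms, uses the energy inequalities \eqref{eq5.1}--\eqref{eq5.2} for each solution separately, and proves an \emph{exact identity} for the cross terms (Lemma \ref{le5.3}) by testing each equation against smooth approximations of the other solution and passing to the limit; Lemma \ref{le5.4} is what makes the limit passage in the trilinear terms legitimate, and the liquid-crystal couplings $\diva(F^{T}F)$ and $F\nabla\mathbf{u}$ need their own algebraic identities (such as $AB:C=A:CB^{T}$) in that step. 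So your plan identifies the correct final estimate, but of the two load-bearing components, the trilinear estimate is proved by an invalid embedding and the weak--strong doubling argument is missing.
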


It is clear that if $\mathbf{u}_{0}=\tilde{\mathbf{u}}_{0}$ and
$F_{0}=\tilde{F}_{0}$, then Theorem \ref{th1.4} implies that
$\mathbf{w}=E=0$, i.e., $\mathbf{u}=\tilde{\mathbf{u}}$ and
$F=\tilde{F}$. Hence, we have the following weak-strong uniqueness
result for the system \eqref{eq1.5}--\eqref{eq1.7}.

\begin{corollary}\label{co1.5}{\rm({Weak-strong uniqueness})}
Let $n\geq 2$, $2\leq p<\infty$, $2<q<\infty$ and
$\frac{n}{p}+\frac{2}{q}>1$. Assume that $(\mathbf{u}_{0},F_{0})\in
L^{2}(\mathbb{R}^{n})$ {\rm($\text{div }\mathbf{u}_{0}=0$)}, and
$(\mathbf{u}, F)$ be a weak solution of the problem
\eqref{eq1.5}--\eqref{eq1.7} with initial data
$(\mathbf{u}_{0},F_{0})$. If we assume further that $(\mathbf{u},
F)\in L^{q}(0,T; \dot{B}^{-1+n/p+2/q}_{p,q}(\mathbb{R}^{n}))$, then
all weak solutions associated with initial data
$(\mathbf{u}_{0},F_{0})$ must coincide with $(\mathbf{u}, F)$ on the
time interval $[0,T)$.
\end{corollary}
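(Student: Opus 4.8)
The plan is to derive this directly from the stability estimate of Theorem~\ref{th1.4}, which I am entitled to invoke. Let $(\mathbf{u}, F)$ denote the distinguished weak solution that additionally satisfies the regularity $(\mathbf{u}, F)\in L^{q}(0,T; \dot{B}^{-1+n/p+2/q}_{p,q})$, and let $(\tilde{\mathbf{u}}, \tilde{F})$ be an arbitrary weak solution sharing the same initial data $(\mathbf{u}_0, F_0)$. Setting $\mathbf{w}=\mathbf{u}-\tilde{\mathbf{u}}$ and $E=F-\tilde{F}$, I would apply Theorem~\ref{th1.4} with exactly this pairing; the key point is that the extra-regularity hypothesis sits on $(\mathbf{u}, F)$, which is precisely the solution appearing inside the exponential factor on the right-hand side of the stability inequality.

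Since both solutions emanate from the same initial data, $\mathbf{w}_{0}=\mathbf{u}_{0}-\tilde{\mathbf{u}}_{0}=0$ and $E_{0}=F_{0}-\tilde{F}_{0}=0$, so the prefactor $\|(\mathbf{w}_{0}, E_{0})\|_{L^{2}}^{2}$ vanishes. It then remains only to verify that the exponential factor stays finite, so that the vanishing prefactor genuinely forces the entire right-hand side to zero. This is exactly where the assumption $(\mathbf{u}, F)\in L^{q}(0,T; \dot{B}^{-1+n/p+2/q}_{p,q})$ enters: for every $t<T$ the integral $\int_{0}^{t}\|(\mathbf{u}, F)\|_{\dot{B}^{-1+n/p+2/q}_{p,q}}^{q}\,d\tau$ is finite, hence the exponential is a finite constant. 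This is also the point that dictates working on the half-open interval $[0,T)$ rather than the closed one.

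Combining these two observations, I would read off from the stability inequality that, for every $t\in[0,T)$,
\begin{equation*}
  \|(\mathbf{w}, E)\|_{L^{2}}^{2}+2\int_{0}^{t}\|(\nabla\mathbf{w}, \nabla E)\|_{L^{2}}^{2}\,d\tau\le 0.
\end{equation*}
Since the left-hand side is manifestly nonnegative, it must vanish identically; in particular $\|(\mathbf{w}, E)\|_{L^{2}}^{2}=0$ for all such $t$, whence $\mathbf{w}\equiv 0$ and $E\equiv 0$, i.e., $\mathbf{u}=\tilde{\mathbf{u}}$ and $F=\tilde{F}$ on $[0,T)$. As $(\tilde{\mathbf{u}}, \tilde{F})$ was an arbitrary weak solution with the prescribed initial data, all such solutions coincide with the distinguished one $(\mathbf{u}, F)$, which is the asserted weak-strong uniqueness.

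There is essentially no hidden difficulty in this deduction: all of the genuine analytic work is already carried out in the proof of Theorem~\ref{th1.4}. The only mild points to handle carefully are the \emph{asymmetry} of the stability estimate (the extra regularity must be imposed on the solution that enters the exponent, not on the competitor) and the restriction to $t<T$ that keeps that exponent finite.
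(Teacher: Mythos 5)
Your proposal is correct and is essentially identical to the paper's own derivation: the corollary is obtained by applying the stability estimate of Theorem \ref{th1.4} with $\mathbf{w}_{0}=E_{0}=0$, whereupon the vanishing prefactor together with the finiteness of the exponential factor (guaranteed on $[0,T)$ by the assumption $(\mathbf{u},F)\in L^{q}(0,T;\dot{B}^{-1+n/p+2/q}_{p,q}(\mathbb{R}^{n}))$) forces $\mathbf{w}\equiv E\equiv 0$. Your two cautionary remarks, on the asymmetry of the hypothesis (the extra regularity sits on the solution entering the exponent) and on restricting to $t<T$, are precisely the points implicit in the paper's one-line deduction.
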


\noindent\textit{Remark 1.2.} For initial data $(\mathbf{u}_{0},
F_{0})\in \dot{B}^{-1+n/p}_{p,r}(\mathbb{R}^{n})$, when $2\leq
p<2n$, $1\leq r<\infty$ such that $\frac{n}{p}+\frac{2}{r}>1$, by
Theorem \ref{th1.1}, there exists $2<q<\infty$ such that
$\frac{n}{p}+\frac{2}{q}>1$, and the system
\eqref{eq1.5}--\eqref{eq1.7} has a unique solution $(\mathbf{u},
F)\in L^{q}(0,T; \dot{B}^{-1+n/p+2/q}_{p,q}(\mathbb{R}^{n}))$.
Hence, in this case, by Corollary \ref{co1.5}, weak-strong
uniqueness holds for the system \eqref{eq1.5}--\eqref{eq1.7}.

\noindent\textit{Organization of the paper.} In Section 2, we recall
some basic facts about Littlewood-Paley decomposition and Besov
spaces. In Section 3, we present the proof of Theorem \ref{th1.1},
which yields existence of global self-similar solutions. In Section
4, we prove Theorem \ref{th1.3}. Section 5 is devoted to the proof
of Theorem \ref{th1.4}.

\section{\bf Preliminaries}

We first recall some basic notions and preliminary results used in
the proof of our main results. Let $\mathcal{S}(\mathbb{R}^{n})$ be
the Schwartz space and $\mathcal{S}'(\mathbb{R}^{n})$ be its dual.
Given $f\in\mathcal{S}(\mathbb{R}^{n})$, the Fourier transform of
it, $\mathcal{F}(f)=\widehat{f}$, is defined by
$$
  \mathcal{F}(f)(\xi)=\widehat{f}(\xi)=\frac{1}{(2\pi)^{n/2}}\int_{\mathbb{R}^{n}}f(x)e^{-ix\cdot\xi}dx.
$$
Let $\mathcal{D}_{1}=\{\xi\in\mathbb{R}^{n},\
|\xi|\leq\frac{4}{3}\}$ and
$\mathcal{D}_{2}=\{\xi\in\mathbb{R}^{n},\ \frac{3}{4}\leq
|\xi|\leq\frac{8}{3}\}$. Choose two non-negative functions $\phi,
\psi\in\mathcal{S}(\mathbb{R}^{n})$ supported, respectively, in
$\mathcal{D}_{1}$ and $\mathcal{D}_{2}$ such that
\begin{align*}
  \psi(\xi)+\sum_{j\geq0}\phi(2^{-j}\xi)=1, \ \ \xi\in\mathbb{R}^{n},\\
  \sum_{j\in\mathbb{Z}}\phi(2^{-j}\xi)=1, \ \
  \xi\in\mathbb{R}^{n}\backslash\{0\}.
\end{align*}
We denote $\phi_{j}(\xi)=\phi(2^{-j}\xi)$, $h=\mathcal{F}^{-1}\phi$
and $\tilde{h}=\mathcal{F}^{-1}\psi$, where $\mathcal{F}^{-1}$ is
the inverse Fourier transform. Then the dyadic blocks $\Delta_{j}$
and $S_{j}$ can be defined as follows:
\begin{align*}
  \Delta_{j}f=\phi(2^{-j}D)f=2^{jn}\int_{\mathbb{R}^{n}}h(2^{j}y)f(x-y)dy,\\
  S_{j}f=\psi(2^{-j}D)f=2^{jn}\int_{\mathbb{R}^{n}}\tilde{h}(2^{j}y)f(x-y)dy.
\end{align*}
Here $D=(D_1, D_2, \cdots, D_n)$ and $D_j=i^{-1}\partial_{x_j}$
($i^{2}=-1$). The set $\{\Delta_{j}, S_{j}\}_{j\in\mathbb{Z}}$ is
called the Littlewood-Paley decomposition. Formally,
$\Delta_{j}=S_{j}-S_{j-1}$ is a frequency projection to the annulus
$\{|\xi|\sim 2^{j}\}$, and $S_{j}=\sum_{k\leq j-1}\Delta_{k}$ is a
frequency projection to the ball $\{|\xi|\leq 2^{j}\}$. For more
details, please reader to \cite{C98} and \cite{L02}.  Let
$\mathcal{Z}(\mathbb{R}^{n})=\big\{f\in \mathcal{S}(\mathbb{R}^{n}):
\ \ \partial^{\alpha}\widehat{f}(0)=0, \
\forall\alpha\in(\mathbb{N}\cup\{0\})^{n}\big\}$, and denote by
$\mathcal{Z}'(\mathbb{R}^{n})$ the dual of it.

\begin{definition}\label{de2.1}
Let $s\in\mathbb{R}$, $(p,r)\in[1, \infty]\times[1, \infty]$, the
homogeneous Besov space $\dot{B}^{s}_{p,r}(\mathbb{R}^{n})$ is
defined by
\begin{equation*}
  \dot{B}^{s}_{p,r}(\mathbb{R}^{n})=\Big\{f\in \mathcal{Z}'(\mathbb{R}^{n}):\ \
  \|f\|_{\dot{B}^{s}_{p,r}}<\infty\Big\},
\end{equation*}
where
\begin{equation*}
  \|f\|_{\dot{B}^{s}_{p,r}}=
\begin{cases}
  \Big(\sum_{j\in\mathbb{Z}}2^{jsr}\|\Delta_{j}f\|_{L^{p}}^{r}\Big)^{1/r}\
  \ \text{for}\ \ 1\leq r<\infty,\\
  \sup_{j\in\mathbb{Z}}2^{js}\|\Delta_{j}f\|_{L^{p}}\ \ \ \ \ \ \ \ \ \text{for}\
  \ r=\infty.
\end{cases}
\end{equation*}
\end{definition}

\noindent\textit{Remark 2.1.} The above definition does not depend
on the choice of the couple $(\phi, \psi)$. Recall that if either
$s<\frac{n}{p}$ or $s=\frac{n}{p}$ and $q=1$, then
$(\dot{B}^{s}_{p,q}(\mathbb{R}^{n}),\|\cdot\|_{\dot{B}^{s}_{p,q}})$
is a Banach space.

Let us now state some basic properties for the homogeneous Besov
spaces.

\begin{lemma}\label{le2.2} {\rm (Bernstein's inequality \cite{C98})}
Let $k\in\mathbb{Z}^{+}$. There exists a constant $C$ independent of
$f$ and $j$ such that for all $1\leq p\leq q\leq\infty$, the
following estimate holds:
\begin{equation}\label{eq2.1}
  \supp \widehat{f}\subset\{|\xi|\leq2^{j}\}\ \ \Longrightarrow\ \
  \sup_{|\alpha|=k}\|\partial^{\alpha}f\|_{L^{q}}\leq
  C2^{jk+jn(1/p-1/q)}\|f\|_{L^{p}}.
\end{equation}
\end{lemma}

\begin{lemma}\label{le2.3} {\rm (\cite{C98})}
Let $k\in\mathbb{Z}^{+}$ and $|\alpha|=k$ for multi-index $\alpha$.
There exists a constant $C_{k}$ depending only on $k$ such that for
all $s\in \mathbb{R}$ and  $1\leq p,r\leq\infty$, the following
estimate holds:
\begin{equation}\label{eq2.2}
  C_{k}^{-1}\|\partial^{\alpha}f\|_{\dot{B}^{s}_{p,r}}\leq \|f\|_{\dot{B}^{s+k}_{p,q}}\leq C_{k}\|\partial^{\alpha}f\|_{\dot{B}^{s}_{p,r}}.
\end{equation}
\end{lemma}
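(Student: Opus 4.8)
The plan is to argue blockwise: I reduce both inequalities to uniform-in-$j$ bounds on the dyadic pieces $\Delta_j f$, exploiting that $\Delta_j$ commutes with $\partial^\alpha$ (both being Fourier multipliers) and that the spectrum of $\Delta_j f$ lives in the annulus $2^j\mathcal{D}_2=\{\tfrac34 2^j\le|\xi|\le\tfrac83 2^j\}$. Since $\|f\|_{\dot B^s_{p,r}}$ is the $\ell^r(\mathbb{Z})$-norm of $(2^{js}\|\Delta_j f\|_{L^p})_j$, everything follows once I control $\|\partial^\alpha\Delta_j f\|_{L^p}$ against $\|\Delta_j f\|_{L^p}$ in both directions with constants comparable to $2^{\pm jk}$.

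For the left inequality I would argue directly from Bernstein's inequality (Lemma~\ref{le2.2}). As $\Delta_j(\partial^\alpha f)=\partial^\alpha\Delta_j f$ has spectrum in $2^j\mathcal{D}_2\subset\{|\xi|\le 2^{j+2}\}$, Lemma~\ref{le2.2} (with $q=p$) gives $\|\partial^\alpha\Delta_j f\|_{L^p}\le C\,2^{jk}\|\Delta_j f\|_{L^p}$. Multiplying by $2^{js}$ and taking the $\ell^r$-norm in $j$ yields $\|\partial^\alpha f\|_{\dot B^s_{p,r}}\le C_k\|f\|_{\dot B^{s+k}_{p,r}}$. This direction is immediate.

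The substantive part is the right inequality, which is a reverse Bernstein estimate, and this is where I expect the only real obstacle. The difficulty is that a single monomial symbol $\xi^\alpha$ vanishes on part of the annulus (e.g. where one coordinate is zero), so $\partial^\alpha$ cannot be inverted on its own; ellipticity is available only for the whole family $\{\xi^\alpha:|\alpha|=k\}$, through the multinomial identity $\sum_{|\alpha|=k}\binom{k}{\alpha}\xi^{2\alpha}=|\xi|^{2k}$. Accordingly I set $g_\alpha(\xi)=\binom{k}{\alpha}\overline{(i\xi)^\alpha}/|\xi|^{2k}$, smooth away from the origin and homogeneous of degree $-k$, so that $\sum_{|\alpha|=k}g_\alpha(\xi)(i\xi)^\alpha=1$ on the annulus. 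Since $\widehat{\Delta_j f}$ is supported there, I may cut off $g_\alpha$ by a fixed $\Theta\in\mathcal{S}$ equal to $1$ on $\mathcal{D}_2$, set $\tilde g_\alpha=g_\alpha\Theta$, and use the homogeneity $g_\alpha(\xi)=2^{-jk}\tilde g_\alpha(2^{-j}\xi)$ on $2^j\mathcal{D}_2$ to write
\[
  \Delta_j f=\sum_{|\alpha|=k}2^{-jk}\,\tilde g_\alpha(2^{-j}D)\,\partial^\alpha\Delta_j f .
\]
The kernel of $\tilde g_\alpha(2^{-j}D)$ is $2^{jn}(\mathcal{F}^{-1}\tilde g_\alpha)(2^j\cdot)$, whose $L^1$-norm equals $\|\mathcal{F}^{-1}\tilde g_\alpha\|_{L^1}$ independently of $j$ (as $\tilde g_\alpha\in\mathcal{S}$); hence by Young's inequality each operator is bounded on $L^p$ uniformly in $j$, giving $\|\Delta_j f\|_{L^p}\le C\,2^{-jk}\sum_{|\alpha|=k}\|\partial^\alpha\Delta_j f\|_{L^p}$. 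Multiplying by $2^{j(s+k)}$ and taking $\ell^r$ in $j$ yields $\|f\|_{\dot B^{s+k}_{p,r}}\le C_k\|\partial^\alpha f\|_{\dot B^s_{p,r}}$, the summation over $|\alpha|=k$ being the natural reading of the right-hand side. The technical heart is exactly the $j$-uniform $L^1$ bound on the kernels, which the scaling $\tilde g_\alpha(2^{-j}D)$ reduces to a single Schwartz kernel; once that is in place, the two displayed inequalities combine to give the asserted equivalence $\|f\|_{\dot B^{s+k}_{p,r}}\sim\|\partial^\alpha f\|_{\dot B^s_{p,r}}$ with a constant $C_k$ depending only on $k$.
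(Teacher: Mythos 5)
Your proof is correct, and in fact the paper offers no argument of its own for this lemma: it is quoted from \cite{C98}, so your write-up supplies what the paper leaves to the reference, and it is the standard proof found there (Bernstein for one direction, an elliptic division of the symbol for the other). Two remarks. First, you rightly isolated the only delicate point: as literally printed, with a single multi-index $\alpha$, the right-hand inequality of \eqref{eq2.2} is false --- take $\widehat f$ concentrated near a point of the annulus where $\xi^\alpha$ is small, e.g.\ near the hyperplane $\xi_1=0$ for $\alpha=(k,0,\dots,0)$ --- and the correct statement, as in \cite{C98}, carries a sum (or sup) over all $|\alpha|=k$; your multinomial identity $\sum_{|\alpha|=k}\binom{k}{\alpha}\xi^{2\alpha}=|\xi|^{2k}$ is exactly what makes that version work, and your ``natural reading'' of the right-hand side is the right repair (the paper's $\dot B^{s+k}_{p,q}$ is likewise a typo for $\dot B^{s+k}_{p,r}$). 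Second, a small imprecision: for $\tilde g_\alpha=g_\alpha\Theta$ to be Schwartz it is not enough that $\Theta\in\mathcal S$ equal $1$ on $\mathcal D_2$; since $g_\alpha$ is homogeneous of degree $-k$ and hence singular at $\xi=0$, you need $\Theta$ to vanish in a neighbourhood of the origin, say $\Theta\in C_c^\infty$ supported in a slightly larger annulus and equal to $1$ on $\mathcal D_2$, which does not affect the identity on the support of $\widehat{\Delta_j f}$. With that adjustment, the $j$-uniform $L^1$ bound on the rescaled kernels, Young's inequality, and the $\ell^r$ summation in $j$ are all sound, and the Bernstein direction is immediate as you say.
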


We now recall the definition of the Chemin-Lerner space
$\mathfrak{L}^{r}(0,T; \dot{B}^{s}_{p,q}(\mathbb{R}^{n}))$:

\begin{definition}\label{de2.4}
Let $s\in \mathbb{R}$, $1\leq p, q, r\leq\infty$, and let $T>0$ be a
fixed number, the space $\mathfrak{L}^{q}(0,T;
\dot{B}^{s}_{p,r}(\mathbb{R}^{n}))$ is defined by
\begin{equation*}
  \mathfrak{L}^{q}(0,T; \dot{B}^{s}_{p,r}(\mathbb{R}^{n})):=\Big\{f\in
  \mathcal{S}'((0,T), \mathcal{Z}'(\mathbb{R}^{n})):\ \
  \|f\|_{\mathfrak{L}^{q}(0,T;
  \dot{B}^{s}_{p,r}(\mathbb{R}^{n}))}<\infty\Big\},
\end{equation*}
where
\begin{equation*}
  \|f\|_{\mathfrak{L}^{q}(0,T; \dot{B}^{s}_{p,r})}=\Big(\sum_{j\in\mathbb{Z}}2^{jsr}\|\Delta_{j}f\|_{L^{q}(0,T;
  L^{p})}^{r}\Big)^{1/r}.
\end{equation*}
\end{definition}

\noindent\textit{Remarks 2.2.} (i) We define the usual space
$L^{q}(0,T; \dot{B}^{s}_{p,r}(\mathbb{R}^{n}))$ equipped with the
norm
\begin{equation*}
  \|f\|_{L^{q}(0,T; \dot{B}^{s}_{p,r})}=\Big(\int_{0}^{T}\Big(\sum_{j\in\mathbb{Z}}2^{jsr}\|\Delta_{j}f\|_{
  L^{p}}^{r}\Big)^{q/r}dt\Big)^{1/q}.
\end{equation*}

(ii) By  Minkowski's inequality, it is readily to verify that
\begin{align}\label{eq2.3}
  \|f\|_{\mathfrak{L}^{q}(0,T; \dot{B}^{s}_{p,r})}\leq\|f\|_{L^{q}(0,T;
  \dot{B}^{s}_{p,r})} \ \ \  \text{if}\ \ \  q\leq r,\\
\label{eq2.4}
  \|f\|_{L^{q}(0,T; \dot{B}^{s}_{p,r})}\leq \|f\|_{\mathfrak{L}^{q}(0,T;
  \dot{B}^{s}_{p,r})} \ \ \ \text{if} \ \ \  r\leq q.
\end{align}

The following product between functions will enable us to estimate
nonlinear terms appeared in \eqref{eq1.5} and \eqref{eq1.6}.

\begin{lemma}\label{le2.5} {\rm (\cite{D05}, \cite{RS96})}
Let $1\leq p$, $q$, $r$, $q_{1}$, $q_{2}\leq \infty$, $s_{1}$,
$s_{2}<\frac{n}{p}$, $s_{1}+s_{2}>0$ and
$\frac{1}{q}=\frac{1}{q_{1}}+\frac{1}{q_{2}}$. Then there exists a
positive constant $C$ depending only on $s_{1}, s_{2}, p, q,
r,q_{1}, q_{2}$ and $n$ such that
\begin{equation}\label{eq2.5}
  \|fg\|_{\mathfrak{L}^{q}(0,T; \dot{B}^{s_{1}+s_{2}-\frac{n}{p}}_{p,
  r})}\leq C\|f\|_{\mathfrak{L}^{q_{1}}(0,T; \dot{B}^{s_{1}}_{p,
  r})}\|g\|_{\mathfrak{L}^{q_{2}}(0,T; \dot{B}^{s_{2}}_{p,
  r})}.
\end{equation}
\end{lemma}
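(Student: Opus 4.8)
The plan is to prove \eqref{eq2.5} by Bony's paraproduct decomposition, writing
$$fg = T_fg + T_gf + R(f,g),$$
where $T_fg=\sum_{j}S_{j-1}f\,\Delta_jg$, $T_gf=\sum_jS_{j-1}g\,\Delta_jf$ and $R(f,g)=\sum_{|j-j'|\le1}\Delta_jf\,\Delta_{j'}g$, and estimating each of the three pieces separately in $\mathfrak L^q(0,T;\dot B^{s_1+s_2-n/p}_{p,r})$. The decisive feature of the Chemin--Lerner norm is that the time integration sits \emph{inside} the $\ell^r$-summation over dyadic blocks; this lets me apply H\"older's inequality in time block-by-block, splitting $\frac1q=\frac1{q_1}+\frac1{q_2}$ at the level of each product $\|(\,\cdot\,)(\,\cdot\,)\|_{L^q_TL^p}\le\|(\,\cdot\,)\|_{L^{q_1}_TL^{\infty\text{ or }p}}\|(\,\cdot\,)\|_{L^{q_2}_TL^p}$, and reduces the whole estimate to discrete convolution inequalities for the sequences $c_k:=2^{ks_1}\|\Delta_kf\|_{L^{q_1}_TL^p}\in\ell^r$ and $d_k:=2^{ks_2}\|\Delta_kg\|_{L^{q_2}_TL^p}\in\ell^r$, whose $\ell^r$-norms are exactly the two norms on the right-hand side of \eqref{eq2.5}.

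For the paraproduct $T_fg$, each summand $S_{j-1}f\,\Delta_jg$ is spectrally supported in an annulus $\{|\xi|\sim2^j\}$, so $\Delta_{k}(T_fg)$ only sees indices $|j-k|\le N_0$. I bound $\|S_{j-1}f\,\Delta_jg\|_{L^q_TL^p}\le\|S_{j-1}f\|_{L^{q_1}_TL^\infty}\|\Delta_jg\|_{L^{q_2}_TL^p}$ and use Bernstein's inequality (Lemma~\ref{le2.2}) to gain the $L^p\to L^\infty$ factor: $\|S_{j-1}f\|_{L^{q_1}_TL^\infty}\le C\sum_{k\le j-2}2^{kn/p}\|\Delta_kf\|_{L^{q_1}_TL^p}=C\sum_{k\le j-2}2^{(k-j)(n/p-s_1)}2^{j(n/p-s_1)}c_k$. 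Since $s_1<\frac np$ the kernel $2^{m(n/p-s_1)}\mathbf 1_{m\le-2}$ is summable, so after multiplying by the weight $2^{k(s_1+s_2-n/p)}$ and using $2^k\sim2^j$, the quantity $2^{k(s_1+s_2-n/p)}\|\Delta_k(T_fg)\|_{L^q_TL^p}$ is dominated by $(\,\text{convolution of }c\text{ with an }\ell^1\text{ kernel})\cdot d$, whose $\ell^r$-norm is $\le C\|c\|_{\ell^r}\|d\|_{\ell^r}$ by Young's inequality together with $\ell^r\hookrightarrow\ell^\infty$. The term $T_gf$ is treated identically with the roles of $f,g$ interchanged, which is where the hypothesis $s_2<\frac np$ is used; both land directly in $\dot B^{s_1+s_2-n/p}_{p,r}$.

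The remainder $R(f,g)$ is the main obstacle, because $\Delta_jf\,\Delta_{j'}g$ (with $|j-j'|\le1$) is supported in a \emph{ball} $\{|\xi|\lesssim2^j\}$ rather than an annulus, so an arbitrarily low output frequency $2^k$ may receive contributions from all high frequencies $j\ge k-N_1$. Here I first estimate $R$ in the space $\dot B^{s_1+s_2}_{p/2,r}$: using $\|\Delta_k(\Delta_jf\,\Delta_{j'}g)\|_{L^{p/2}}\le C\|\Delta_jf\|_{L^p}\|\Delta_{j'}g\|_{L^p}$ and H\"older in time, summing over $j\ge k-N_1$ produces the geometric kernel $2^{(k-j)(s_1+s_2)}\mathbf 1_{k-j\le N_1}$, which is summable \emph{precisely because} $s_1+s_2>0$; this is the one and only place that hypothesis enters. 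Young's inequality and $\ell^r\hookrightarrow\ell^\infty$ then give $\|R(f,g)\|_{\mathfrak L^q(\dot B^{s_1+s_2}_{p/2,r})}\le C\|c\|_{\ell^r}\|d\|_{\ell^r}$. Finally I pass from $p/2$ to $p$ by the Chemin--Lerner Besov embedding $\mathfrak L^q(\dot B^{s_1+s_2}_{p/2,r})\hookrightarrow\mathfrak L^q(\dot B^{s_1+s_2-n/p}_{p,r})$, which follows block-by-block from Bernstein's inequality ($\|\Delta_ku\|_{L^p}\le C2^{kn/p}\|\Delta_ku\|_{L^{p/2}}$) and accounts for the loss of $\frac np$ derivatives in the target index. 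Adding the three contributions yields \eqref{eq2.5}; convergence of the defining series in $\mathcal Z'(\mathbb R^n)$ is guaranteed a posteriori by these very bounds.
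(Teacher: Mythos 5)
The paper offers no proof of Lemma \ref{le2.5} at all --- it is quoted from \cite{D05} and \cite{RS96} --- so there is nothing internal to compare against; your Bony-decomposition argument is precisely the standard proof found in those references, transplanted to the Chemin--Lerner setting, and it is correct in the range in which the paper actually invokes the lemma, namely $2\le p<2n$. The mechanics are right: H\"older in time inside each dyadic block (the defining feature of $\mathfrak{L}^{q}$, as you note), Bernstein to convert $S_{j-1}f$ into an $\ell^{1}$-convolution against the sequence $c_k$ using $s_1<\frac{n}{p}$ (and symmetrically $s_2<\frac{n}{p}$ for $T_g f$), and the remainder estimated in $\dot B^{s_1+s_2}_{p/2,r}$ with the geometric sum over $j\ge k-N_1$ converging exactly because $s_1+s_2>0$, followed by the block-by-block Bernstein embedding into $\dot B^{s_1+s_2-n/p}_{p,r}$.

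One caveat, which is really a defect of the lemma as stated rather than of your argument: for $1\le p<2$ your remainder step breaks down. H\"older lands you in $L^{p/2}$ with $p/2<1$, a quasi-normed space on which the operators $\Delta_k$ (convolutions with integrable kernels) are no longer uniformly bounded; acting on a function whose spectrum fills a ball of radius $\sim 2^{j}$, the $L^{p/2}$ operator norm of $\Delta_k$ grows like $2^{(j-k)n(2/p-1)}$, and tracking this loss turns the summability condition for the remainder into $s_1+s_2>n\bigl(\frac{2}{p}-1\bigr)$ rather than $s_1+s_2>0$. Moreover the paper's Lemma \ref{le2.2} is only stated for $1\le p\le q\le\infty$, so you cannot quote it either for $\|S_{j-1}f\|_{L^\infty}$ via $L^{p}$ with $p<1$ issues or for the $L^{p/2}\to L^{p}$ embedding when $p<2$. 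This is precisely why the cited references impose $s_1+s_2>n\max\bigl(0,\frac{2}{p}-1\bigr)$; the statement as printed in the paper is slightly too generous at $p<2$, but harmlessly so here, since every application in the paper has $p\ge2$, where your proof is complete.
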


\noindent\textit{Notations:} The product of Banach spaces
$\mathcal{X}\times\mathcal{Y}$ will be equipped with the usual norm
$\|(f,g)\|_{\mathcal{X}\times\mathcal{Y}}=\|f\|_{\mathcal{X}}+\|g\|_{\mathcal{Y}}$,
and if $\mathcal{X}=\mathcal{Y}$, we use $\|(f,g)\|_{\mathcal{X}}$
to denote by $\|(f,g)\|_{\mathcal{X}\times\mathcal{X}}$. For two
$n\times n$ matrixes $A=(a_{ij})_{i,j=1}^{n}$ and
$B=(b_{ij})_{i,j=1}^{n}$, we denote
$A:B=\sum^{n}_{i,j=1}a_{ij}b_{ij}$. Throughout the paper, $C$ stands
for a generic constant, and its value may change from line to line.

\section{Local and global existence of solution}
In this section we prove Theorem \ref{th1.1}. Let $p$ and $r$ be as
in Theorem 1.1, i.e., $2\leq p<2n$ and $1\leq r\leq\infty$, and let
$1<q\leq\infty$. We choose a number $2<q_1\leq2q$ such that
$\frac{2}{q_1}+\frac{n}{p}>\frac{3}{2}$. For a constant $T>0$ to be
specified later, we denote $\mathcal{X}_{T}=\mathfrak{L}^{q_1}(0,T;
\dot{B}^{-1+n/p+2/q_1}_{p,r}(\mathbb{R}^{n}))$. In order to
establish the desired estimates in $\mathcal{X}_{T}$, let us recall
the solvability of the Cauchy problem of the heat equation:
\begin{equation}\label{eq3.1}
\begin{cases}
  \partial_{t} u-\Delta u= f(x,t), \ \
  x\in\mathbb{R}^{n}, \ t>0,\\
  u(x,0)=u_{0}(x), \ \ x\in\mathbb{R}^{n}.
\end{cases}
\end{equation}

\begin{proposition}\label{pro3.1} {\rm (\cite{D05})}
Let $s\in \mathbb{R}$ and $1\leq p,q_1,r\leq\infty$, and let $T>0$
be a real number. Assume that $u_{0}\in
\dot{B}^{s}_{p,r}(\mathbb{R}^{n})$ and $f\in\mathfrak{L}^{q_1}(0,T;
\dot{B}^{s+2/q_1-2}_{p,r}(\mathbb{R}^{n}))$. Then the Cauchy problem
\eqref{eq3.1} has a unique solution
\begin{equation*}
  u\in\underset{q_1\leq q\leq\infty}{\cap}\mathfrak{L}^{q}(0,T;\dot{B}^{s+2/q}_{p,r}(\mathbb{R}^{n})).
\end{equation*}
Moreover, there exists a constant $C>0$ depending only on $n$ such
that for any $q_1\leq q\leq \infty$,
\begin{equation}\label{eq3.2}
  \|u\|_{\mathfrak{L}^{q}(0,T; \dot{B}^{s+2/q}_{p,r})}\leq
  C\Big(\|u_{0}\|_{\dot{B}^{s}_{p,r}}+\|f\|_{\mathfrak{L}^{q_1}(0,T;
  \dot{B}^{s+2/q_1-2}_{p,r})}\Big).
\end{equation}
Besides, if $u_{0}$ belongs to the closure of
$\mathcal{S}(\mathbb{R}^n)$ in $\dot{B}^{s}_{p,r}(\mathbb{R}^{n})$,
then $u\in C([0,T), \dot{B}^{s}_{p,r}(\mathbb{R}^{n}))$.

\end{proposition}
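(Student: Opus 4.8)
The plan is to prove Proposition \ref{pro3.1} by the Fourier localization method, reducing everything to frequency-localized estimates for the heat semigroup and reassembling them through Young's inequality in time. Since the problem is linear, I would construct the solution explicitly rather than by fixed point: applying the dyadic block $\Delta_j$ to \eqref{eq3.1} and using that $\Delta_j$ commutes with $\partial_t$ and $\Delta$, each piece $\Delta_j u$ solves the same heat equation with data $\Delta_j u_0$ and forcing $\Delta_j f$, so Duhamel's formula gives
$$\Delta_j u(t)=e^{t\Delta}\Delta_j u_0+\int_0^t e^{(t-\tau)\Delta}\Delta_j f(\tau)\,d\tau.$$
Defining $u$ by summing these (equivalently, the usual Duhamel representation) produces a candidate solution, and the estimates below will show it lies in every $\mathfrak{L}^q(0,T;\dot{B}^{s+2/q}_{p,r})$ with $q_1\le q\le\infty$.

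The key ingredient I would establish first is the smoothing estimate for the heat semigroup on a dyadic annulus: because $\widehat{\Delta_j g}$ is supported in $\{|\xi|\sim 2^j\}$, one has $\|e^{t\Delta}\Delta_j g\|_{L^p}\le C\,e^{-c2^{2j}t}\|\Delta_j g\|_{L^p}$ for constants $c,C>0$ independent of $j$ and $t$. This follows by writing $e^{t\Delta}\Delta_j g$ as convolution against a kernel whose $L^1$ norm is controlled through the rescaled multiplier $e^{-t|\xi|^2}\phi(2^{-j}\xi)$, and it is the quantitative heart of the whole argument.

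With this decay in hand I would estimate the two Duhamel contributions in $L^q(0,T;L^p)$ separately. For the free term, $\|e^{t\Delta}\Delta_j u_0\|_{L^q(0,T;L^p)}\lesssim \|\Delta_j u_0\|_{L^p}\,(2^{2j})^{-1/q}$, so multiplying by $2^{j(s+2/q)}$ reproduces exactly $2^{js}\|\Delta_j u_0\|_{L^p}$ — the regularity gain $2/q$ is paid for by the time integrability. For the forcing term I would bound $\|\Delta_j u(t)\|_{L^p}\lesssim (K_j * \|\Delta_j f(\cdot)\|_{L^p})(t)$ with $K_j(t)=e^{-c2^{2j}t}\mathbf{1}_{t>0}$, and apply Young's convolution inequality in time with $1+\tfrac1q=\tfrac1a+\tfrac1{q_1}$; the constraint $1/a\in[0,1]$ is precisely $q\ge q_1$, which is why the range of $q$ appears. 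Since $\|K_j\|_{L^a}\lesssim 2^{-2j/a}$ and $2/a=2-2/q_1+2/q$, the power of $2^j$ collapses to $2^{j(s+2/q_1-2)}$ after multiplying by $2^{j(s+2/q)}$, matching the forcing norm. Taking the $\ell^r$ norm over $j\in\mathbb{Z}$ of both bounds and using the triangle inequality yields \eqref{eq3.2}. I would emphasize that the Chemin-Lerner norm is tailored to this scheme: the time-$L^q$ integration is performed block by block, inside the $j$-sum, which is exactly the order Young's inequality requires, so no loss occurs.

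Uniqueness is then immediate from linearity and \eqref{eq3.2}, since the difference of two solutions with identical data solves the homogeneous problem and hence vanishes. For the final continuity assertion when $u_0$ lies in the closure of $\mathcal{S}(\mathbb{R}^n)$ in $\dot{B}^s_{p,r}$, I would argue by density: for Schwartz data the solution is plainly continuous in time with values in $\dot{B}^s_{p,r}$, and the uniform bound \eqref{eq3.2} lets one pass to the limit. The main obstacle I anticipate is the frequency-localized semigroup decay estimate; once it is secured, the remaining steps are routine applications of Young's and Minkowski's inequalities.
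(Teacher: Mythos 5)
Your proof is correct and takes essentially the same route as the source: the paper states Proposition \ref{pro3.1} without proof, citing \cite{D05}, and the standard argument there is exactly your scheme --- the annulus-localized smoothing estimate $\|e^{t\Delta}\Delta_j g\|_{L^p}\le C e^{-c2^{2j}t}\|\Delta_j g\|_{L^p}$, Duhamel applied block by block, and Young's convolution inequality in time with $1+\frac1q=\frac1a+\frac1{q_1}$, which is precisely where the restriction $q\ge q_1$ enters. Your exponent bookkeeping ($2/a=2-2/q_1+2/q$ collapsing to the weight $2^{j(s+2/q_1-2)}$) and the observation that the Chemin--Lerner norm performs the time integration inside the $j$-sum are exactly the point of the cited result.
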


We also recall an existence and uniqueness result for an abstract
operator equation in a generic Banach space. For the proof we refer
the reader to see Lemari\'{e}-Rieusset \cite{L02}.

\begin{proposition}\label{pro3.2} {\em (\cite{L02})}
Let $\mathcal{X}$ be a Banach space and
$\mathbf{B}:\mathcal{X}\times\mathcal{X}\rightarrow\mathcal{X}$ is a
bilinear bounded operator, $\|\cdot\|_{\mathcal{X}}$ being the
$\mathcal{X}$-norm. Assume that for any $u_{1},u_{2}\in
\mathcal{X}$, we have
\begin{align*}
  \|\mathbf{B}(u_{1},u_{2})\|_{\mathcal{X}}\leq C_{0}
  \|u_{1}\|_{\mathcal{X}}\|u_{2}\|_{\mathcal{X}}.
\end{align*}
Then for any $y\in \mathcal{X}$ such that $\|y\|_{\mathcal{X}}\leq
\varepsilon<\frac{1}{4C_{0}}$, the equation $u=y+\mathbf{B}(u,u)$
has a solution $u$ in $\mathcal{X}$. Moreover, this solution is the
only one such that $\|u\|_{\mathcal{X}}\leq 2\varepsilon$, and
depends continuously on $y$ in the following sense: if
$\|\widetilde{y}\|_{\mathcal{X}}\leq \varepsilon$,
$\widetilde{u}=\widetilde{y}+\mathbf{B}(\widetilde{u},\widetilde{u})$
and $\|\widetilde{u}\|_{\mathcal{X}}\leq 2\varepsilon$, then
\begin{align*}
  \|u-\widetilde{u}\|_{\mathcal{X}}\leq \frac{1}{1-4\varepsilon
  C_{0}}\|y-\widetilde{y}\|_{\mathcal{X}}.
\end{align*}
\end{proposition}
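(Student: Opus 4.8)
The plan is to recognize this as a direct application of the Banach contraction mapping principle. I would introduce the map $\Phi: \mathcal{X} \to \mathcal{X}$ defined by $\Phi(u) = y + \mathbf{B}(u,u)$, so that solutions of $u = y + \mathbf{B}(u,u)$ are exactly the fixed points of $\Phi$. The strategy is to show that $\Phi$ restricts to a contraction of the closed ball $\overline{\mathcal{B}} = \{u \in \mathcal{X} : \|u\|_{\mathcal{X}} \leq 2\varepsilon\}$ into itself; since $\mathcal{X}$ is a Banach space, $\overline{\mathcal{B}}$ is a complete metric space and the contraction mapping theorem then furnishes a unique fixed point in $\overline{\mathcal{B}}$, which is the asserted solution.

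First I would verify the self-mapping property. For $u \in \overline{\mathcal{B}}$, the bilinear bound and $\|y\|_{\mathcal{X}} \leq \varepsilon$ give
\begin{equation*}
  \|\Phi(u)\|_{\mathcal{X}} \leq \|y\|_{\mathcal{X}} + C_0 \|u\|_{\mathcal{X}}^2 \leq \varepsilon + 4 C_0 \varepsilon^2 = \varepsilon(1 + 4 C_0 \varepsilon).
\end{equation*}
Since $\varepsilon < \frac{1}{4 C_0}$ forces $4 C_0 \varepsilon < 1$, the right-hand side is strictly less than $2\varepsilon$, so $\Phi(\overline{\mathcal{B}}) \subset \overline{\mathcal{B}}$. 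Next I would establish the contraction estimate using the key bilinear identity
\begin{equation*}
  \Phi(u_1) - \Phi(u_2) = \mathbf{B}(u_1, u_1) - \mathbf{B}(u_2, u_2) = \mathbf{B}(u_1, u_1 - u_2) + \mathbf{B}(u_1 - u_2, u_2),
\end{equation*}
which holds by bilinearity. Taking $\mathcal{X}$-norms and applying the bound $C_0$ twice, for $u_1, u_2 \in \overline{\mathcal{B}}$ we obtain
\begin{equation*}
  \|\Phi(u_1) - \Phi(u_2)\|_{\mathcal{X}} \leq C_0\big(\|u_1\|_{\mathcal{X}} + \|u_2\|_{\mathcal{X}}\big)\|u_1 - u_2\|_{\mathcal{X}} \leq 4 C_0 \varepsilon\, \|u_1 - u_2\|_{\mathcal{X}}.
\end{equation*}
As $4 C_0 \varepsilon < 1$, the map $\Phi$ is a genuine contraction, and the fixed point theorem yields a unique $u \in \overline{\mathcal{B}}$ with $u = \Phi(u)$, establishing both existence and the uniqueness-within-$\overline{\mathcal{B}}$ claim.

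For the continuous dependence on the data, I would let $u, \tilde{u}$ be the solutions associated with $y, \tilde{y}$ respectively, both lying in $\overline{\mathcal{B}}$, and subtract the two fixed-point equations. Using the same bilinear splitting as above gives
\begin{equation*}
  u - \tilde{u} = (y - \tilde{y}) + \mathbf{B}(u, u - \tilde{u}) + \mathbf{B}(u - \tilde{u}, \tilde{u}),
\end{equation*}
whence $\|u - \tilde{u}\|_{\mathcal{X}} \leq \|y - \tilde{y}\|_{\mathcal{X}} + 4 C_0 \varepsilon \|u - \tilde{u}\|_{\mathcal{X}}$. Absorbing the last term into the left side (legitimate because $4C_0\varepsilon < 1$) produces the stated Lipschitz bound $\|u - \tilde{u}\|_{\mathcal{X}} \leq (1 - 4\varepsilon C_0)^{-1}\|y - \tilde{y}\|_{\mathcal{X}}$.

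There is no serious obstacle here, as the argument is entirely self-contained once the correct ball radius $2\varepsilon$ and threshold $\frac{1}{4C_0}$ are identified; the only point requiring mild care is the bookkeeping of constants, namely checking that the choice $\varepsilon < \frac{1}{4C_0}$ simultaneously guarantees the self-mapping property ($1 + 4C_0\varepsilon < 2$) and the strict contraction factor ($4C_0\varepsilon < 1$), and that the same factor reappears in the continuous-dependence estimate. The completeness of $\mathcal{X}$ is the essential structural hypothesis, since it is what allows the Picard iterates $u_{k+1} = \Phi(u_k)$ to converge.
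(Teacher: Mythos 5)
Your proof is correct and complete: the self-mapping check on the ball of radius $2\varepsilon$, the bilinear splitting $\mathbf{B}(u_1,u_1)-\mathbf{B}(u_2,u_2)=\mathbf{B}(u_1,u_1-u_2)+\mathbf{B}(u_1-u_2,u_2)$, the contraction factor $4C_0\varepsilon<1$, and the absorption argument for the Lipschitz dependence on $y$ are all sound. The paper itself gives no proof, deferring to Lemari\'e-Rieusset \cite{L02}, and the argument there is exactly this standard fixed-point/Picard iteration scheme, so your proposal coincides with the intended proof.
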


Now for given $(\mathbf{u},F)\in \mathcal{X}_{T}$, we define by
$\mathcal{G}(\mathbf{u},F)=(\bar{\mathbf{u}},\bar{F})$, where
$(\bar{\mathbf{u}}, \bar{F})$ is a solution of the following linear
equations:
\begin{align}\label{eq3.3}
  &\partial_{t} \bar{\mathbf{u}}-\Delta \bar{\mathbf{u}}=-\mathbb{P}\mathbf{u}\cdot\nabla\mathbf{u}
  -\mathbb{P}\diva(F^{T}F),\\
\label{eq3.4}
  &\partial_{t} \bar{F}-\Delta \bar{F}=-\mathbf{u}\cdot\nabla F-F\nabla
  \mathbf{u},\\
\label{eq3.5}
  &(\bar{\mathbf{u}},\bar{F})|_{t=0}=(\mathbf{u}_{0},F_{0}).
\end{align}

\begin{proposition}\label{pro3.3} Let $(\mathbf{u}, F)\in
\mathcal{X}_{T}$. Then we have $(\bar{\mathbf{u}}$, $\bar{F})\in
\mathcal{X}_{T}$. In addition, the following estimates hold:
\begin{align}\label{eq3.6}
  &\|\bar{\mathbf{u}}\|_{\mathcal{X}_{T}}\leq
  \|e^{t\Delta}\mathbf{u}_{0}\|_{\mathcal{X}_{T}}+C(\|\mathbf{u}\|_{\mathcal{X}_{T}}^{2}+\|F\|_{\mathcal{X}_{T}}^{2}),\\
\label{eq3.7}
  &\|\bar{F}\|_{\mathcal{X}_{T}}\leq
  \|e^{t\Delta}F_{0}\|_{\mathcal{X}_{T}}+C\|\mathbf{u}\|_{\mathcal{X}_{T}}\|F\|_{\mathcal{X}_{T}}.
\end{align}
\end{proposition}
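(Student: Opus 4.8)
The plan is to treat \eqref{eq3.3}--\eqref{eq3.4} as inhomogeneous heat equations and to apply the linear estimate of Proposition \ref{pro3.1} after splitting each solution into its free evolution plus a Duhamel part. Writing $\bar{\mathbf{u}}=e^{t\Delta}\mathbf{u}_0+\mathbf{B}_1$ and $\bar{F}=e^{t\Delta}F_0+\mathbf{B}_2$, where $\mathbf{B}_1,\mathbf{B}_2$ solve the same equations with zero initial data, the whole task reduces to bounding the forcing terms in the correct Chemin--Lerner norm. Since $\mathbb{P}=I+\nabla(-\Delta)^{-1}\diva$ is a Fourier multiplier homogeneous of degree zero, it acts boundedly on every $\dot{B}^{s}_{p,r}$ and hence on $\mathfrak{L}^{q}(0,T;\dot{B}^{s}_{p,r})$, so it may be discarded in all estimates. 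Throughout I set $\sigma=-1+n/p+2/q_1$, the regularity index defining $\mathcal{X}_T=\mathfrak{L}^{q_1}(0,T;\dot{B}^{\sigma}_{p,r})$.

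The key bookkeeping step I would record first is the following. By Lemma \ref{le2.5}, a product of two factors in $\mathcal{X}_T$ lies in $\mathfrak{L}^{q_1/2}(0,T;\dot{B}^{2\sigma-n/p}_{p,r})$, since $1/(q_1/2)=1/q_1+1/q_1$; after shifting by one derivative via Lemma \ref{le2.3} this becomes $\mathfrak{L}^{q_1/2}(0,T;\dot{B}^{-3+n/p+4/q_1}_{p,r})$, because $2\sigma-n/p-1=-3+n/p+4/q_1$. This is exactly the space to which Proposition \ref{pro3.1} applies with the parameter ``$q_1$'' there taken to be $q_1/2$ and ``$s$'' taken to be $-1+n/p$: indeed $(-1+n/p)+2/(q_1/2)-2=-3+n/p+4/q_1$, and the output at time-index $q_1\ge q_1/2$ is $\dot{B}^{(-1+n/p)+2/q_1}_{p,r}=\dot{B}^{\sigma}_{p,r}$, i.e. precisely $\mathcal{X}_T$. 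Hence every bilinear contribution returns to $\mathcal{X}_T$ with a quadratic bound, and $\|\mathbf{B}_i\|_{\mathcal{X}_T}$ is controlled by the forcing in $\mathfrak{L}^{q_1/2}(0,T;\dot{B}^{-3+n/p+4/q_1}_{p,r})$.

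Next I would apply this to the individual nonlinearities. Along the iteration the iterates produced by $\mathcal{G}$ are divergence free (as $\mathbb{P}$ projects onto divergence-free fields and $\diva\mathbf{u}_0=0$), so for \eqref{eq3.3} I may write $\mathbf{u}\cdot\nabla\mathbf{u}=\diva(\mathbf{u}\otimes\mathbf{u})$, while $\diva(F^{T}F)$ is already a divergence; both are divergences of products of two $\mathcal{X}_T$-factors, and Lemma \ref{le2.5} with $s_1=s_2=\sigma$ (requiring $\sigma<n/p$, i.e. $q_1>2$, and $\sigma>0$) gives $\|\bar{\mathbf{u}}\|_{\mathcal{X}_T}\le\|e^{t\Delta}\mathbf{u}_0\|_{\mathcal{X}_T}+C(\|\mathbf{u}\|_{\mathcal{X}_T}^2+\|F\|_{\mathcal{X}_T}^2)$, which is \eqref{eq3.6}. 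Likewise $\mathbf{u}\cdot\nabla F=\diva(\mathbf{u}\otimes F)$ is handled the same way and produces the term $C\|\mathbf{u}\|_{\mathcal{X}_T}\|F\|_{\mathcal{X}_T}$ in \eqref{eq3.7}.

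The remaining term $F\nabla\mathbf{u}$ is the main obstacle, and the only one requiring the full strength of the parameter restrictions. Because of its index structure (the derivative falls on $\mathbf{u}$ while the free summation index sits on $F$) it cannot be recast as a pure divergence, so I would estimate it directly as a product of $F\in\dot{B}^{\sigma}_{p,r}$ with $\nabla\mathbf{u}\in\dot{B}^{\sigma-1}_{p,r}$. Applying Lemma \ref{le2.5} with $s_1=\sigma$, $s_2=\sigma-1$ now forces the condition $s_1+s_2=2\sigma-1>0$, that is $n/p+2/q_1>3/2$, which is precisely the standing hypothesis on $q_1$; the product again lands in $\mathfrak{L}^{q_1/2}(0,T;\dot{B}^{-3+n/p+4/q_1}_{p,r})$ and feeds into Proposition \ref{pro3.1} to yield the second half of \eqref{eq3.7}. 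It is exactly this constraint $n/p+2/q_1>3/2$ combined with $q_1>2$ (so $2/q_1<1$) that forces $n/p>1/2$, i.e. $p<2n$, which explains Remark 1.1(ii); consequently the one genuinely delicate point of the whole argument is to verify at the outset that an admissible exponent $q_1$ with $2<q_1\le 2q$ and $2/q_1+n/p>3/2$ exists, which is where the hypothesis $2\le p<2n$ is consumed.
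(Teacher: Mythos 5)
Your argument is correct and, in its skeleton, coincides with the paper's proof of Proposition \ref{pro3.3}: write $\bar{\mathbf{u}},\bar{F}$ as free evolution plus Duhamel term, discard $\mathbb{P}$ by its boundedness on homogeneous Besov spaces, place each bilinear term in $\mathfrak{L}^{q_1/2}(0,T;\dot{B}^{-3+n/p+4/q_1}_{p,r})$ via Lemma \ref{le2.5} combined with Lemma \ref{le2.3}, and close with the heat estimate of Proposition \ref{pro3.1} applied with time exponent $q_1/2$ and $s=-1+n/p$ (your parameter bookkeeping here is exact). The one place you genuinely deviate is the treatment of $\mathbf{u}\cdot\nabla\mathbf{u}$ and $\mathbf{u}\cdot\nabla F$: you recast them as $\diva(\mathbf{u}\otimes\mathbf{u})$ and $\diva(\mathbf{u}\otimes F)$ so as to use the symmetric indices $s_1=s_2=\sigma$ with $\sigma=-1+n/p+2/q_1$ (needing only $\sigma>0$ and $\sigma<n/p$), but this rewriting requires $\diva\,\mathbf{u}=0$, which Proposition \ref{pro3.3} does not assume --- the map $\mathcal{G}$, and the bilinear operator fed into the fixed-point scheme of Proposition \ref{pro3.2}, act on arbitrary elements of $\mathcal{X}_T$. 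Your repair (restrict to the closed subspace of divergence-free velocity fields, which $\mathcal{G}$ preserves since $\mathbb{P}$ projects and $\diva\,\mathbf{u}_0=0$) is legitimate, but the paper sidesteps the issue entirely by estimating $\mathbf{u}\cdot\nabla\mathbf{u}$ directly as in \eqref{eq3.8}, with the asymmetric indices $s_1=\sigma$, $s_2=\sigma-1$; this costs nothing, because the resulting condition $2\sigma-1>0$, i.e. $\frac{n}{p}+\frac{2}{q_1}>\frac{3}{2}$, is in force anyway, being unavoidable for $F\nabla\mathbf{u}$ --- a term you correctly single out as the one that cannot be written as a divergence, and whose estimate is what consumes $p<2n$, in agreement with Remark 1.1(ii). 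So: same route, with a slightly more economical index choice on two terms purchased at the price of an extra (and, as stated, unlicensed) structural hypothesis that the paper's direct product estimate renders unnecessary.
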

\begin{proof}
We prove only the results for $\bar{\mathbf{u}}$, it can be done
analogous for $\bar{F}$. By the Duhamel principle, \eqref{eq3.3} can
be transformed into the following equivalent integral equations:
$$
  \bar{\mathbf{u}}(t)=e^{t\Delta}\mathbf{u}_{0}-\int_{0}^{t}e^{(t-\tau)\Delta}\mathbb{P}(\mathbf{u}\cdot\nabla\mathbf{u})(\tau)d\tau
  -\int_{0}^{t}e^{(t-\tau)\Delta}\mathbb{P}\diva(F^{T}F)(\tau)d\tau.
$$
Since we have assumed $2\leq p<2n$, $2<q_1<\infty$ and
$\frac{n}{p}+\frac{2}{q_1}>\frac{3}{2}$, we can apply Lemma
\ref{le2.5} by choosing $s_{1}=-1+\frac{n}{p}+\frac{2}{q_1}$ and
$s_{2}=-2+\frac{n}{p}+\frac{2}{q_1}$ and Lemma \ref{le2.3} to obtain
that
\begin{align}\label{eq3.8}
  \|\mathbf{u}\cdot\nabla\mathbf{u}\|_{\mathfrak{L}^{q_1/2}(0,T;
  \dot{B}^{-3+n/p+4/q_1}_{p,r})}
  &\leq C\|\mathbf{u}\|_{\mathfrak{L}^{q_1}(0,T;
  \dot{B}^{-1+n/p+2/q_1}_{p,r})}\|\nabla\mathbf{u}\|_{\mathfrak{L}^{q_1}(0,T;
  \dot{B}^{-2+n/p+2/q_1}_{p,r})}\nonumber\\
  &\leq C\|\mathbf{u}\|_{\mathfrak{L}^{q_1}(0,T;
  \dot{B}^{-1+n/p+2/q_1}_{p,r})}^{2}.
\end{align}
Similarly, by Lemmas \ref{le2.3} and \ref{le2.5},
\begin{align}\label{eq3.9}
  \|\text{div}(F^{T}F)\|_{\mathfrak{L}^{q_1/2}(0,T; \dot{B}^{-3+n/p+4/q_1}_{p,r})}
  &\leq C\|F^{T}F\|_{\mathfrak{L}^{q_1/2}(0,T; \dot{B}^{-2+n/p+4/q_1}_{p,r})}\nonumber\\
  &\leq C\|F\|_{\mathfrak{L}^{q_1}(0,T;
  \dot{B}^{-1+n/p+2/q_1}_{p,r})}^{2}.
\end{align}
Hence, from Proposition \ref{pro3.1} we know $\mathbf{u}\in
\mathcal{X}_{T}$. Moreover, by using the boundedness of $\mathbb{P}$
in the homogeneous Besov spaces, \eqref{eq3.8} and \eqref{eq3.9}, we
get
\begin{align}\label{eq3.10}
  \|\bar{\mathbf{u}}\|_{\mathcal{X}_{T}}&\!\leq\!
  \|e^{t\Delta}\mathbf{u}_{0}\|_{\mathcal{X}_{T}}\!+\!C\|\mathbf{u}\cdot\nabla\mathbf{u}\|_{\mathfrak{L}^{q_1/2}(0,T;
  \dot{B}^{-3+n/p+4/q_1}_{p,r})}\!+\!C\|\text{div}(F^{T}F)\|_{\mathfrak{L}^{q_1/2}(0,T;
  \dot{B}^{-3+n/p+4/q_1}_{p,r})}\nonumber\\
  &\leq\|e^{t\Delta}\mathbf{u}_{0}\|_{\mathcal{X}_{T}}+C(\|\mathbf{u}\|_{\mathcal{X}_{T}}^{2}+\|F\|_{\mathcal{X}_{T}}^{2}).
\end{align}
This proves Proposition \ref{pro3.3}.
\end{proof}

The Proposition \ref{pro3.3} implies that $\mathcal{G}$ is
well-defined and maps $\mathcal{X}_{T}$ into itself. Moreover, from
\eqref{eq3.6} and  \eqref{eq3.7}, we know there exists a constant
$C_0>0$ such that for all $(\mathbf{u},F)\in \mathcal{X}_{T}$ and
$(\mathbf{\bar{u}},\bar{F})= \mathcal{G}(\mathbf{u},F)$, we have the
following estimate:
\begin{align}\label{eq3.11}
  \|(\mathbf{\bar{u}},\bar{F})\|_{\mathcal{X}_{T}}\leq
  \|(e^{t\Delta}\mathbf{u}_{0},e^{t\Delta} F_{0})\|_{\mathcal{X}_{T}}
  + C_{0}\|(\mathbf{u},F)\|_{\mathcal{X}_{T}}^2.
\end{align}
\textbf{Case 1.} (The small initial data). Taking $T=\infty$ and
denoting $\mathcal{X}=\mathcal{X}_{\infty}$. From Proposition
\ref{pro3.1}, there exists a constant $C_{1}$ such that we can
rewrite \eqref{eq3.11} as follows:
\begin{align}\label{eq3.12}
  \|(\mathbf{\bar{u}},\bar{F})\|_{\mathcal{X}}\leq
  C_{1}\|(\mathbf{u}_{0}, F_{0})\|_{\dot{B}^{-1+n/p}_{p,r}}
  +C_{0}\|(\mathbf{u},F)\|_{\mathcal{X}}^2.
\end{align}
Now if we choose $\varepsilon>0$ sufficiently small such that
$C_{1}\|(\mathbf{u}_{0},
F_{0})\|_{\dot{B}^{-1+n/p}_{p,r}}\leq\varepsilon<\frac{1}{4C_{0}}$,
that is to say,  $\|(\mathbf{u}_{0},
F_{0})\|_{\dot{B}^{-1+n/p}_{p,r}}\leq\frac{\varepsilon}{C_{1}}<\frac{1}{4C_{0}C_{1}}$,
then by Proposition \ref{pro3.2}, the system
\eqref{eq1.5}--\eqref{eq1.7} has a global solution.

\textbf{Case 2.} (The large initial data). In this case we shall use
the Fourier localization technique to obtain existence of local
solution of the problem \eqref{eq1.5}--\eqref{eq1.7}. To this end,
we split $\mathbf{u}_{0}=\mathbf{u}_{01}+\mathbf{u}_{02}$ such that
$\widehat{\mathbf{u}}_{0}(\xi)=\widehat{\mathbf{u}}_{0}1_{\{|\xi|>2^{N}\}}+\widehat{\mathbf{u}}_{0}1_{\{|\xi|\leq2^{N}\}}:
=\widehat{\mathbf{u}_{01}}+\widehat{\mathbf{u}_{02}}$, where
$1_{\mathcal{D}}$ represents the characteristic function on the
domain $\mathcal{D}$. Similarly, we split $F_{0}=F_{01}+F_{02}$.
Since $2\leq p<2n$, by using the properties of the Besov spaces,
there exists $N\in\mathbb{Z}^{+}$ such that
$C_{1}\|(\mathbf{u}_{01},F_{01})\|_{\dot{B}^{-1+n/p}_{p,r}}\leq
\frac{1}{2}\varepsilon$, we see that
\begin{equation}\label{eq3.13}
  \|(e^{t\Delta}\mathbf{u}_{0},
  e^{t\Delta}F_{0})\|_{\mathcal{X}_{T}}\leq
  \frac{1}{2}\varepsilon+\|(e^{t\Delta}\mathbf{u}_{02},
  e^{t\Delta}F_{02})\|_{\mathcal{X}_{T}}.
\end{equation}
Applying the Bernstein's inequality, there exists a constant $C_{2}$
such that
\begin{align*}
  \|(e^{t\Delta}\mathbf{u}_{02},
  e^{t\Delta}F_{02})\|_{\mathcal{X}_{T}}&=\|(e^{t\Delta}\mathbf{u}_{02},
  e^{t\Delta}F_{02})\|_{\mathfrak{L}^{q_1}(0,T;
  \dot{B}^{-1+n/p+2/q_1}_{p,r})}\\
  &\leq C2^{(2N)/q_1}\|(e^{t\Delta}\mathbf{u}_{02},
  e^{t\Delta}F_{02})\|_{\mathfrak{L}^{q_1}(0,T;
  \dot{B}^{-1+n/p}_{p,r})}\\&\leq C_{2}2^{(2N)/q_1}T^{1/q_1}\|(\mathbf{u}_{0},
  F_{0})\|_{\dot{B}^{-1+n/p}_{p,r}}.
\end{align*}
Hence, if we choose $T$ small enough so that
$C_{2}2^{(2N)/q_1}T^{1/q_1}\|(\mathbf{u}_{0},F_{0})\|_{\dot{B}^{-1+n/p}_{p,r}}\leq\frac{1}{2}\varepsilon$,
i.e.,
\begin{equation}\label{eq3.14}
  T\leq
  \Big(\frac{\varepsilon}{C_{2}2^{1+(2N)/q_1}\|(\mathbf{u}_{0},F_{0})\|_{\dot{B}^{-1+n/p}_{p,r}}}\Big)^{q_1},
\end{equation}
then we have $\|(e^{t\Delta}\mathbf{u}_{02},
e^{t\Delta}F_{02})\|_{\mathcal{X}_{T}}\leq \frac{\varepsilon}{2}$.
This result together with  \eqref{eq3.13} yield the fact that for
such $T$ defined by \eqref{eq3.14}, we have
$\|(e^{t\Delta}\mathbf{u}_{0},
e^{t\Delta}F_{0})\|_{\mathcal{X}_{T}}\leq\varepsilon$. By applying
Proposition \ref{pro3.2} again, there exists a local solution to the
system \eqref{eq1.5}--\eqref{eq1.7}.

If $(\mathbf{u},F)\in \mathcal{X}_{T}$ is a solution of the system
\eqref{eq1.5}--\eqref{eq1.7}, then one can proceed the same way as
the proof of Proposition \ref{pro3.3} to obtain that
$$
  \mathbb{P}\mathbf{u}\cdot\nabla\mathbf{u},\
  \mathbb{P}\diva(F^{T}F),\
  \mathbf{u}\cdot\nabla F,\ F\nabla\mathbf{u}\in  \mathfrak{L}^{q_1/2}(0,T;
  \dot{B}^{-3+n/p+4/q_1}_{p,r}(\mathbb{R}^{n})).
$$
Hence, for any $\frac{q_1}{2}\leq q\leq\infty$, we have
$$
  (\mathbf{u}, F)\in \mathfrak{L}^{q}(0,T;
  \dot{B}^{-1+n/p+2/q}_{p,r}(\mathbb{R}^{n})).
$$
Moreover, if $(\mathbf{u}_0,F_0)$ belongs to the closure of
$\mathcal{S}(\mathbb{R}^n)$ in
$\dot{B}^{-1+n/p}_{p,r}(\mathbb{R}^{n})$, then we have $(\mathbf{u},
F)\in C([0,T], \dot{B}^{-1+n/p}_{p,r}(\mathbb{R}^{n}))$.

Finally, we consider the uniqueness of solution. Note that in
Proposition \ref{pro3.2} we obtained only a partial answer to the
uniqueness problem of solution, i.e., in the closed ball
$B_{2\varepsilon}$, the solution of \eqref{eq1.5}--\eqref{eq1.7} is
unique. Now we intend to get rid of this restrictive condition.

Let $(\mathbf{u},F)$ and $(\tilde{\mathbf{u}},\tilde{F})$ be two
solutions of \eqref{eq1.5}--\eqref{eq1.7} in $\mathcal{X}_{T}$
associated with initial data $(\mathbf{u}_{0},F_{0})$ and
$(\tilde{\mathbf{u}}_{0}, \tilde{F}_{0})$, respectively. Set
$\mathbf{w}=\mathbf{u}-\tilde{\mathbf{u}}$ and $E=F-\tilde{F}$. Then
$(\mathbf{w},E)$ satisfies the following equations:
\begin{equation*}
\begin{cases}
  &\partial_{t} \mathbf{w}-\Delta
  \mathbf{w}+\mathbf{w}\cdot\nabla\mathbf{u}+\tilde{\mathbf{u}}\cdot\nabla\mathbf{w}+\nabla\cdot(E^{T}F)+\nabla\cdot(\tilde{F}^{T}E)=0,\\
  &\partial_{t} E-\Delta
  E+\mathbf{w}\cdot\nabla F+\tilde{\mathbf{u}}\cdot\nabla E+E\nabla\mathbf{u}+\tilde{F}\nabla\mathbf{w}=0,\\
  &\mathbf{w}(x,0)=\mathbf{w}_{0}(x)=\mathbf{u}_{0}(x)-\tilde{\mathbf{u}}_{0}(x),\
  \ E(x,0)=E_{0}(x)=F_{0}(x)-\tilde{F}_{0}(x).
\end{cases}
\end{equation*}
As the proof of Proposition \ref{pro3.3}, we can prove that
\begin{align*}
  \|\mathbf{w}\cdot\nabla\mathbf{u}&+\tilde{\mathbf{u}}\cdot\nabla\mathbf{w}+\nabla\cdot(E^{T}F)+\nabla\cdot(\tilde{F}^{T}E)\|_{\mathcal{X}_{T}}\\
  &\leq
  C\big(\|\mathbf{u}\|_{\mathcal{X}_{T}}+\|\tilde{\mathbf{u}}\|_{\mathcal{X}_{T}}+\|F\|_{\mathcal{X}_{T}}+\|\tilde{F}\|_{\mathcal{X}_{T}}\big)
  \|(\mathbf{w},E)\|_{\mathcal{X}_{T}}
\end{align*}
and
\begin{align*}
  \|\mathbf{w}\cdot\nabla F&+\tilde{\mathbf{u}}\cdot\nabla E+E\nabla\mathbf{u}+\tilde{F}\nabla\mathbf{w}\|_{\mathcal{X}_{T}}\\
  &\leq
  C\big(\|\mathbf{u}\|_{\mathcal{X}_{T}}+\|\tilde{\mathbf{u}}\|_{\mathcal{X}_{T}}+\|F\|_{\mathcal{X}_{T}}+\|\tilde{F}\|_{\mathcal{X}_{T}}\big)
  \|(\mathbf{w},E)\|_{\mathcal{X}_{T}}.
\end{align*}
Hence, by Proposition \ref{pro3.1}, we get
\begin{align*}
  \|(\mathbf{w},E )\|_{\mathcal{X}_{T}}&\leq C_{1}\|(\mathbf{w}_{0},
  E_{0})\|_{\dot{B}^{-1+n/p}_{p,r}}\\&+ C_{0}
  \big(\|\mathbf{u}\|_{\mathcal{X}_{T}}+\|\tilde{\mathbf{u}}\|_{\mathcal{X}_{T}}+\|F\|_{\mathcal{X}_{T}}+\|\tilde{F}\|_{\mathcal{X}_{T}}\big)
  \|(\mathbf{w},E)\|_{\mathcal{X}_{T}}.
\end{align*}
Denoting
$M(T):=C_{0}\big(\|\mathbf{u}\|_{\mathcal{X}_{T}}+\|\tilde{\mathbf{u}}\|_{\mathcal{X}_{T}}
+\|F\|_{\mathcal{X}_{T}}+\|\tilde{F}\|_{\mathcal{X}_{T}}\big)$. By
the Lebesgue dominated convergence theorem, we know that $M(T)$ is a
continuous nondecreasing function vanishing at zero. Hence, if we
choose $T_{1}$ sufficiently small such that
$M(T_{1})\leq\frac{1}{2}$, then
\begin{equation}\label{eq3.15}
  \|(\mathbf{w},E )\|_{\mathcal{X}_{T}}\leq 2C_{1}\|(\mathbf{w}_{0},
  E_{0})\|_{\dot{B}^{-1+n/p}_{p,r}}.
\end{equation}
Repeating the above procedure to the interval $[0,T_{1})$, $[T_{1},
2T_{1})$, $\ldots$ enables us to conclude that there exists a
constant $C$ such that
\begin{equation}\label{eq3.16}
  \|(\mathbf{w},E )\|_{\mathcal{X}_{T}}\leq C\|(\mathbf{w}_{0},
  E_{0})\|_{\dot{B}^{-1+n/p}_{p,r}}.
\end{equation}
This implies the uniqueness result immediately.

\section{The proof of Theorem \ref{th1.3}}

Let $2\leq p<2n$, $1\leq r\leq\infty$, $2<q<\infty$ such that
$\frac{n}{p}+\frac{2}{q}>\frac{3}{2}$. Assume that
$\|(\mathbf{u},F)\|_{\mathfrak{L}^{q}(0,T;
\dot{B}^{-1+n/p+2/q}_{p,r})}<\infty$. By the embedding relation
\eqref{eq2.4} and the proof of Theorem \ref{th1.1} we see that
\begin{align*}
  \|(\mathbf{u},&F)\|_{L^{\infty}(0,T;\dot{B}^{-1+n/p}_{p,r})}\leq\|(\mathbf{u},F)\|_{\mathfrak{L}^{\infty}(0,T;\dot{B}^{-1+n/p}_{p,r})}\\
  &\leq
  C_{1}\|(\mathbf{u}_{0}, F_{0})\|_{\dot{B}^{-1+n/p}_{p,r}}+C_{0}\|(\mathbf{u},F)\|_{\mathfrak{L}^{q}(0,T;
 \dot{B}^{-1+n/p+2/q}_{p,r})}^{2}=M<\infty.
\end{align*} It suffices to prove that if $\|(\mathbf{u},F)\|_{\mathfrak{L}^{q}(0,T;
\dot{B}^{-1+n/p+2/q}_{p,r})}<\infty$, then $T^{*}>T$. In other
words, if $T^{*}<\infty$, then
$\|(\mathbf{u},F)\|_{\mathfrak{L}^{q}(0,T;
\dot{B}^{-1+n/p+2/q}_{p,r})}=\infty$. To this end, for any
$t\in[0,T)$, we take $(\mathbf{u}(x,t), F(x,t))$ as a new initial
data of the problem \eqref{eq1.5}--\eqref{eq1.7}, and split
$\mathbf{u}(x,t)=\mathbf{u}_{1}(x,t)+\mathbf{u}_{2}(x,t)$ such that
\begin{equation*}
  \widehat{\mathbf{u}}(\xi,t)=\widehat{\mathbf{u}}1_{|\xi|>2^{N}}(\xi,t)+\widehat{\mathbf{u}}1_{|\xi|\leq2^{N}}(\xi,t)
  :=\widehat{\mathbf{u}_{1}}(\xi,t)+\widehat{\mathbf{u}_{2}}(\xi,t).
\end{equation*}
Similarly, we split $F(x,t):=F_{1}(x,t)+F_{2}(x,t)$. Since $2\leq
p<2n$, by using the properties of the Besov spaces, there exists a
sufficiently large constant $N\in\mathbb{N}$ such that
\begin{equation}\label{eq4.1}
  C_{1}\|(\mathbf{u}_{1}(t), F_{1}(t))\|_{\dot{B}^{-1+n/p}_{p,r}}\leq
  \frac{\varepsilon}{2}.
\end{equation}
On the other hand,  if we choose $\tilde{T}>t$ such that
\begin{equation*}
  \tilde{T}-t\leq \Big(\frac{\varepsilon}{C_{2}2^{1+(2N)/q}M}\Big)^{q}:=T_{\varepsilon},
\end{equation*}
then we can obtain $\|(e^{t\Delta}\mathbf{u}_{2},
e^{t\Delta}F_{2})\|_{\mathcal{X}_{t+T_{\varepsilon}}}\leq
\frac{\varepsilon}{2}$. This result together with  \eqref{eq4.1}, by
Proposition \ref{pro3.2}, yield that there exists a constant
$T_{\varepsilon}$ depending only on $\varepsilon$ and $M$ such that
for any $t\in[0,T)$, the problem \eqref{eq1.5}--\eqref{eq1.7} has a
solution on the time interval $[t,t+T_{\varepsilon})$. By the
uniqueness we know that all solutions obtained in this way are equal
in their common existence interval, so that the solution can be
extended to the time interval $[0,T+T_{\varepsilon})$. That is to
say $T^{*}>T$, we complete the proof of Theorem \ref{th1.3}.

\section{Stability and weak-strong uniqueness}

The aim of this section is to prove Theorem \ref{th1.4}. Let us
recall the definition of weak solutions to the system
\eqref{eq1.5}--\eqref{eq1.7}.
\begin{definition}\label{def5.1}
The vector-valued function $(\mathbf{u}, F)$ is called a weak
solution of \eqref{eq1.5}--\eqref{eq1.7} on
$\mathbb{R}^{n}\times(0,T)$ if it satisfies the following
conditions:
\begin{itemize}
\item [(1)] $(\mathbf{u},F)\in L^{\infty}(0,T;
L^{2}(\mathbb{R}^{n}))\cap
L^{2}(0,T;\dot{H}^{1}(\mathbb{R}^{n})):=(\mathcal{WS})$, where
$\dot{H}^{1}(\mathbb{R}^{n})=\dot{B}^{1}_{2,2}(\mathbb{R}^{n})$ is
the usual homogeneous Sobolev space.

\item [(2)]$(\mathbf{u},F)$ satisfies the system
\eqref{eq1.5}--\eqref{eq1.7} in the distributional sense, i.e.,
$\diva\mathbf{u}=0$ in the distributional sense and for all
$\mathbf{v}\in C_{0}^{\infty}(\mathbb{R}^{n}\times(0,T))$ and $G\in
C_{0}^{\infty}(\mathbb{R}^{n}\times(0,T))$ with $\diva\mathbf{v}=0$,
we have
\begin{align*}
  \int^{T}_{0}\int_{\mathbb{R}^{n}} \mathbf{u}\partial_{t}\mathbf{v} dxdt
  &-\int^{T}_{0}\int_{\mathbb{R}^{n}}\nabla\mathbf{u}:\nabla\mathbf{v}dxdt
  -\int^{T}_{0}\int_{\mathbb{R}^{n}}\mathbf{u}\cdot\nabla\mathbf{u}\cdot\mathbf{v}dxdt\\
  &=-\int^{T}_{0}\int_{\mathbb{R}^{n}}F^{T}F:\nabla\mathbf{v}dxdt,
\end{align*}
and
\begin{align*}
  \int^{T}_{0}\int_{\mathbb{R}^{n}} F:\partial_{t}G dxdt
  &-\int^{T}_{0}\int_{\mathbb{R}^{n}}\nabla F:\nabla Gdxdt
  -\int^{T}_{0}\int_{\mathbb{R}^{n}}\mathbf{u}\cdot\nabla F: Gdxdt\\
  &=\int^{T}_{0}\int_{\mathbb{R}^{n}}F\nabla\mathbf{u}:Gdxdt.
\end{align*}

\item [(3)] The following energy inequality holds:
\begin{align*}
  \int_{\mathbb{R}^{n}}(|\mathbf{u}(t)|^{2}+|F(t)|^{2})dx+2\int_{0}^{t}\int_{\mathbb{R}^{n}}(|\nabla\mathbf{u}|^{2}+|\nabla
  F|^{2})dxd\tau\leq
  \int_{\mathbb{R}^{n}}(|\mathbf{u}_{0}|^{2}+|F_{0}|^{2})dx.
\end{align*}
\end{itemize}
\end{definition}

\noindent\textit{Remark 5.1.} Formally, taking
$\mathbf{v}=\mathbf{u}$ and $G=F$, and adding them together, we get
\begin{align*}
  \frac{1}{2}\frac{d}{dt}\int_{\mathbb{R}^{n}}(|\mathbf{u}(t)|^{2}+|F(t)|^{2})dx+\int_{\mathbb{R}^{n}}(|\nabla\mathbf{u}|^{2}+|\nabla
  F|^{2})dx=0,
\end{align*}
which implies the above energy inequality. Here we have used the
fact $AB:C=A:CB^{T}=B:A^{T}C$ for any three $n\times n$ matrixes
$A$, $B$ and $C$,

Let $(\tilde{\mathbf{u}}_{0},\tilde{F}_{0})\in
L^{2}(\mathbb{R}^{n})$, $(\mathbf{u}_{0},F_{0})\in
L^{2}(\mathbb{R}^{n})$, and we denote by $(\tilde{\mathbf{u}},
\tilde{F})$ and $(\mathbf{u}, F)$ two weak solutions in the space
$\mathcal{WS}$ associated with the initial data
$(\tilde{\mathbf{u}}_{0},\tilde{F}_{0})$ and
$(\mathbf{u}_{0},F_{0})$, respectively.  Assume that $(\mathbf{u},
F)\in L^{q}(0,T;\dot{B}^{-1+n/p+2/q}_{p,q}(\mathbb{R}^{n}))$, where
$2\leq p<\infty$ and $2<q<\infty$ satisfying
$\frac{n}{p}+\frac{2}{q}>1$. Obviously, the above energy inequality
yields that
\begin{align}\label{eq5.1}
  &\|(\tilde{\mathbf{u}}(t), \tilde{F}(t))\|_{L^{2}}^{2}+2\int_{0}^{t}\|(\nabla\tilde{\mathbf{u}}(\tau),
  \nabla \tilde{F}(\tau))\|_{L^{2}}^{2}d\tau\leq \|(\tilde{\mathbf{u}}_{0}, \tilde{F}_{0})\|_{L^{2}}^{2};\\
\label{eq5.2}
  &\|(\mathbf{u}(t),F(t))\|_{L^{2}}^{2}+2\int_{0}^{t}\|(\nabla\mathbf{u}(\tau),\nabla F(\tau))\|_{L^{2}}^{2}d\tau
  \leq \|(\mathbf{u}_{0}, F_{0})\|_{L^{2}}^{2}.
\end{align}
Let $\mathbf{w}=\mathbf{u}-\tilde{\mathbf{u}}$, $E=F-\tilde{F}$,
$\mathbf{w}_{0}=\mathbf{u}_{0}-\tilde{\mathbf{u}}_{0}$ and
$E_{0}=F_{0}-\tilde{F}_{0}$. To prove Theorem \ref{th1.4}, it
suffices to prove the following result:
\begin{proposition}\label{pro5.2}
Under the hypotheses of Theorem \ref{th1.4}, we have
\begin{align}\label{eq5.3}
  \|(\mathbf{w}(t), E(t))\|_{L^{2}}^{2}&+2\int^{t}_{0}\|(\nabla\mathbf{w}(\tau), \nabla E(\tau))\|_{L^{2}}^{2}d\tau
  \nonumber\\&\leq\|(\mathbf{w}_{0}, E_{0})\|_{L^{2}}^{2}
  \times\exp\Big(C\int^{t}_{0}\|(\mathbf{u}(\tau),
  F(\tau))\|_{\dot{B}^{-1+n/p+2/q}_{p,q}}^{q}d\tau\Big).
\end{align}
\end{proposition}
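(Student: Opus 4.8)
The plan is to reduce everything to the scalar differential inequality
\begin{equation*}
  \frac{d}{dt}\|(\mathbf{w},E)\|_{L^{2}}^{2}+2\|(\nabla\mathbf{w},\nabla E)\|_{L^{2}}^{2}
  \le C\|(\mathbf{u},F)\|_{\dot{B}^{-1+n/p+2/q}_{p,q}}^{q}\,\|(\mathbf{w},E)\|_{L^{2}}^{2},
\end{equation*}
since integrating it by Gr\"onwall's lemma gives \eqref{eq5.3} immediately. To obtain it I would start from the equations satisfied by the differences $\mathbf{w}=\mathbf{u}-\tilde{\mathbf{u}}$ and $E=F-\tilde F$ already recorded in Section~3, test the $\mathbf{w}$-equation against $\mathbf{w}$ and the $E$-equation against $E$ in $L^{2}$, and add. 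The diffusion terms produce $\|\nabla\mathbf{w}\|_{L^{2}}^{2}+\|\nabla E\|_{L^{2}}^{2}$; the transport terms $\tilde{\mathbf{u}}\cdot\nabla\mathbf{w}$ and $\tilde{\mathbf{u}}\cdot\nabla E$ integrate to zero because {\rm $\diva\tilde{\mathbf{u}}=0$}, and the pressure (present before projection) drops for the same reason since {\rm $\diva\mathbf{w}=0$}.

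Next I would exploit the algebraic identity $AB\!:\!C=B\!:\!A^{T}C$ from Remark~5.1 to cancel the two coupling terms $\diva(\tilde F^{T}E)$ and $\tilde F\nabla\mathbf{w}$: after integrating by parts, $\int\diva(\tilde F^{T}E)\cdot\mathbf{w}=-\int \tilde F^{T}E:\nabla\mathbf{w}$ while $\int \tilde F\nabla\mathbf{w}:E=\int\nabla\mathbf{w}:\tilde F^{T}E$, so their sum vanishes. What survives are the four ``bad'' terms involving the solution $(\mathbf{u},F)$, namely $\int\mathbf{w}\cdot\nabla\mathbf{u}\cdot\mathbf{w}$, $\int\diva(E^{T}F)\cdot\mathbf{w}$, $\int\mathbf{w}\cdot\nabla F:E$ and $\int E\nabla\mathbf{u}:E$. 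Using {\rm $\diva\mathbf{w}=0$} together with further integrations by parts, each of these can be rewritten so that the single derivative falls on a factor of $(\mathbf{w},E)$; every term then takes the common schematic form $\int|\nabla(\mathbf{w},E)|\,|(\mathbf{w},E)|\,|(\mathbf{u},F)|\,dx$.

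The heart of the estimate is then to control $\int|\nabla(\mathbf{w},E)|\,|(\mathbf{w},E)|\,|(\mathbf{u},F)|$ by $\frac12\|(\nabla\mathbf{w},\nabla E)\|_{L^{2}}^{2}$ plus the Gr\"onwall factor. By H\"older's inequality this is bounded by $\|(\nabla\mathbf{w},\nabla E)\|_{L^{2}}\,\|(\mathbf{w},E)\|_{L^{a}}\,\|(\mathbf{u},F)\|_{L^{m}}$; the Sobolev embedding for Besov spaces places $(\mathbf{u},F)\in\dot{B}^{-1+n/p+2/q}_{p,q}$ into $L^{m}$ with $m=\frac{nq}{q-2}$, and the pair $(q,m)$ satisfies the critical Serrin scaling $\frac{2}{q}+\frac{n}{m}=1$ (this is exactly where the hypothesis $\frac{n}{p}+\frac{2}{q}>1$ enters, guaranteeing $-1+\frac{n}{p}+\frac{2}{q}>0$). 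Interpolating $\|(\mathbf{w},E)\|_{L^{a}}\lesssim\|(\mathbf{w},E)\|_{L^{2}}^{\theta}\|(\nabla\mathbf{w},\nabla E)\|_{L^{2}}^{1-\theta}$ by Gagliardo--Nirenberg with $\theta=\frac{2}{q}$, followed by Young's inequality with conjugate exponents $\frac{q}{q-1}$ and $q$, absorbs the dissipation and leaves precisely $C\|(\mathbf{w},E)\|_{L^{2}}^{2}\|(\mathbf{u},F)\|_{\dot{B}^{-1+n/p+2/q}_{p,q}}^{q}$; since $(\mathbf{u},F)\in L^{q}(0,T;\dot{B}^{-1+n/p+2/q}_{p,q})$, the exponent in \eqref{eq5.3} is finite.

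The step I expect to be the main obstacle is not the algebra but the \emph{rigorous justification} of the energy computation: two weak solutions in the class $\mathcal{WS}$ are not a priori admissible test functions for one another, so the formal pairing above must be legitimized. The standard remedy is to write $\|(\mathbf{w},E)\|_{L^{2}}^{2}=\|(\mathbf{u},F)\|_{L^{2}}^{2}+\|(\tilde{\mathbf{u}},\tilde F)\|_{L^{2}}^{2}-2\langle(\mathbf{u},F),(\tilde{\mathbf{u}},\tilde F)\rangle$, bound the first two pieces by the energy inequalities \eqref{eq5.1}--\eqref{eq5.2}, and compute the cross term by inserting the more regular solution $(\mathbf{u},F)\in L^{q}(0,T;\dot{B}^{-1+n/p+2/q}_{p,q})$ as a test function in the weak formulation of $(\tilde{\mathbf{u}},\tilde F)$ and conversely, after a mollification in time to make $\partial_{t}$ meaningful and passing to the limit. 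Care must also be taken with the borderline nature of the product/embedding estimates in the critical case, but these are by-now classical Serrin-type arguments; the genuinely delicate point is controlling the time-regularity so that the cross-term identity holds as an equality while the self-energy pieces are only controlled from above.
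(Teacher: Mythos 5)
Your overall architecture coincides with the paper's: expanding $\|(\mathbf{w},E)\|_{L^{2}}^{2}$ into the two self-energies plus a cross term, bounding the self parts by the energy inequalities \eqref{eq5.1}--\eqref{eq5.2}, justifying the cross-term identity by smooth approximation (this is exactly Lemma \ref{le5.3}), cancelling the $\tilde{F}$-coupling terms via $AB:C=B:A^{T}C$, and closing with Gronwall; your four surviving integrals are, after integration by parts, precisely those in \eqref{eq5.26}. The genuine gap is in your trilinear estimate. The embedding $\dot{B}^{-1+n/p+2/q}_{p,q}(\mathbb{R}^{n})\hookrightarrow L^{m}$ with $m=\frac{nq}{q-2}$ that you invoke is \emph{false} in part of the admissible range: for $0<s<\frac{n}{p}$ and $\frac{1}{m}=\frac{1}{p}-\frac{s}{n}$, the sharp Besov--Lebesgue embedding requires the third index to satisfy $q\leq m$; the optimal unconditional statement is only the Lorentz refinement $\dot{B}^{s}_{p,q}\hookrightarrow L^{m,q}$, and $L^{m,q}\subset L^{m}$ fails when $q>m$. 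Here $q\leq m$ amounts to $q\leq n+2$, whereas Theorem \ref{th1.4} allows any $2<q<\infty$: for instance $n=p=2$, $q=10$ satisfies $\frac{n}{p}+\frac{2}{q}>1$, yet $\dot{B}^{1/5}_{2,10}(\mathbb{R}^{2})\not\hookrightarrow L^{5/2}(\mathbb{R}^{2})$. So your H\"older/Gagliardo--Nirenberg/Young chain establishes the analogues of \eqref{eq5.27}--\eqref{eq5.30} only for $2<q\leq n+2$, not under the stated hypotheses.

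This is exactly why the paper does not argue by a single H\"older split but instead quotes the Gallagher--Planchon trilinear estimate (Lemma \ref{le5.4}, from \cite{GP02}), whose proof proceeds by Littlewood--Paley/paraproduct decomposition and which yields \eqref{eq5.5}, and hence the pointwise-in-time bounds \eqref{eq5.27}--\eqref{eq5.30}, in the whole range $\frac{n}{p}+\frac{2}{q}>1$. Note also that the same trilinear continuity is what legitimizes your mollification step: passing to the limit in the cross terms (as in \eqref{eq5.9}--\eqref{eq5.12}) needs continuity of $(\mathbf{u},\mathbf{v},\mathbf{w})\mapsto\int\int\mathbf{u}\cdot\nabla\mathbf{v}\cdot\mathbf{w}$ on $\mathcal{WS}\times\mathcal{WS}\times L^{q}(0,T;\dot{B}^{-1+n/p+2/q}_{p,q})$, so the gap would infect the justification of the energy identity as well as the final absorption. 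The repair is simply to replace your embedding chain by an appeal to Lemma \ref{le5.4} (alternatively, restrict to $q\leq n+2$; a Lorentz-space H\"older argument using $\dot{B}^{s}_{p,q}\hookrightarrow L^{m,q}$ could conceivably work, but the companion Gagliardo--Nirenberg inequality would then need a Lorentz-refined form in the dual index $q'$, which does not follow from the standard interpolation inequality and would have to be proved separately).
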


Note that by \eqref{eq5.1} and \eqref{eq5.2},
\begin{align*}
  \|(\mathbf{w}(t)&, E(t))\|_{L^{2}}^{2}+2\int^{t}_{0}\|(\nabla\mathbf{w}(\tau), \nabla E(\tau))\|_{L^{2}}^{2}d\tau
  \!=\!\|(\mathbf{u}(t), F(t))\|_{L^{2}}^{2}+\|(\tilde{\mathbf{u}}(t),
  \tilde{F}(t))\|_{L^{2}}^{2}\\&+2\int^{t}_{0}\|(\nabla\mathbf{u}(\tau), \nabla
  F(\tau))\|_{L^{2}}^{2}d\tau+2\int^{t}_{0}\|(\nabla\tilde{\mathbf{u}}(\tau), \nabla
  \tilde{F}(\tau))\|_{L^{2}}^{2}d\tau-2\big(\mathbf{u}(t)|\tilde{\mathbf{u}}(t)\big)\\&-2\big(F(t)|
  \tilde{F}(t)\big)-4\int_{0}^{t}\big(\nabla\mathbf{u}(\tau)|
  \nabla\tilde{\mathbf{u}}(\tau)\big)d\tau-4\int_{0}^{t}\big(\nabla
  F(\tau)|
  \nabla\tilde{F}(\tau)\big)d\tau\\
  &\leq\|(\mathbf{u}_{0}, F_{0})\|_{L^{2}}^{2}+\|(\tilde{\mathbf{u}}_{0}, \tilde{F}_{0})\|_{L^{2}}^{2}
  -2\big(\mathbf{u}(t)| \tilde{\mathbf{u}}(t)\big)-2\big(F(t)|
  \tilde{F}(t)\big)\\&-4\int_{0}^{t}\big(\nabla\mathbf{u}(\tau)|
  \nabla\tilde{\mathbf{u}}(\tau)\big)d\tau-4\int_{0}^{t}\big(\nabla
  F(\tau)|
  \nabla\tilde{F}(\tau)\big)d\tau.
\end{align*}
Here $(\cdot|\cdot)$ denotes the scalar product in
$L^{2}(\mathbb{R}^{2})$. In order to prove Proposition \ref{pro5.2},
we need to introduce the following lemma.

\begin{lemma}\label{le5.3}
Under the hypothese of Theorem \ref{th1.4}, the following equality
holds for all $t\leq T$,
\begin{align}\label{eq5.4}
  \big(\mathbf{u}(t)&| \tilde{\mathbf{u}}(t)\big)+\big(F(t)|
  \tilde{F}(t)\big)+2\int_{0}^{t}\big(\nabla\mathbf{u}(\tau)|
  \nabla\tilde{\mathbf{u}}(\tau)\big)d\tau+2\int_{0}^{t}\big(\nabla
  F(\tau)|
  \nabla\tilde{F}(\tau)\big)d\tau\nonumber\\
  &=(\mathbf{u}_{0}|\tilde{\mathbf{u}}_{0})+(F_{0}|\tilde{F}_{0})
  -\int_{0}^{t}\int_{\mathbb{R}^{n}}\mathbf{w}\cdot\nabla\mathbf{w}\cdot\mathbf{u}dxd\tau
  +\int_{0}^{t}\int_{\mathbb{R}^{n}}\tilde{F}^{T}\tilde{F}:\nabla\mathbf{u}dxd\tau\nonumber\\
  &+\int_{0}^{t}\int_{\mathbb{R}^{n}}F^{T}F:\nabla\mathbf{u}dxd\tau
  -\int_{0}^{t}\int_{\mathbb{R}^{n}}\nabla\mathbf{u}:(\tilde{F}^{T}F+F^{T}\tilde{F})dxd\tau\nonumber\\
  &-\int_{0}^{t}\int_{\mathbb{R}^{n}}\mathbf{w}\cdot\nabla F:\tilde{F}dxd\tau
  +\int_{0}^{t}\int_{\mathbb{R}^{n}}F:\tilde{F}\nabla\mathbf{w}dxd\tau.
\end{align}
\end{lemma}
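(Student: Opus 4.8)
The plan is to derive the identity \eqref{eq5.4} by using each weak solution as a test function in the weak formulation of the other, and then adding the resulting identities. Concretely, I would pair the $\mathbf{u}$-equation of Definition \ref{def5.1} against $\tilde{\mathbf{u}}$, the $\tilde{\mathbf{u}}$-equation against $\mathbf{u}$, the $F$-equation against $\tilde{F}$, and the $\tilde{F}$-equation against $F$. Since $\mathbf{u},\tilde{\mathbf{u}},F,\tilde F$ lie only in the energy class $\mathcal{WS}$ and are not admissible as test functions directly, the first step is a regularization: I would mollify each field in the time variable by a Friedrichs mollifier, use the (now $t$-smooth) mollified fields as legitimate test functions, and then let the mollification parameter tend to zero. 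The convergence of the linear terms follows from $(\mathbf{u},F),(\tilde{\mathbf{u}},\tilde F)\in\mathcal{WS}$, while the convergence of the nonlinear terms is exactly where the extra hypothesis $(\mathbf{u},F)\in L^q(0,T;\dot B^{-1+n/p+2/q}_{p,q})$ with $\frac{n}{p}+\frac 2q>1$ is used, via Lemma \ref{le2.5} and Hölder's inequality in time, to guarantee that every space-time integral appearing on the right of \eqref{eq5.4} is absolutely convergent.

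Second, once the pairings are justified I would read off, for a.e. $t$, the weak identities $\langle\partial_t\mathbf{u},\mathbf{v}\rangle=-(\nabla\mathbf{u}|\nabla\mathbf{v})-\langle\mathbf{u}\cdot\nabla\mathbf{u},\mathbf{v}\rangle+(F^TF|\nabla\mathbf{v})$ and the analogous one for $F$, then take $\mathbf{v}=\tilde{\mathbf{u}}$, $\mathbf{v}=\mathbf{u}$, $G=\tilde F$, $G=F$ respectively, and add. Using that weak solutions in $\mathcal{WS}$ are weakly continuous in time with values in $L^2$ (so the pointwise-in-time inner products are well defined), the four time-derivative contributions collapse by the product rule $\frac{d}{dt}(\mathbf{u}|\tilde{\mathbf{u}})=\langle\partial_t\mathbf{u},\tilde{\mathbf{u}}\rangle+\langle\partial_t\tilde{\mathbf{u}},\mathbf{u}\rangle$ (and likewise for $F$); after integrating on $[0,t]$ they give the boundary terms $(\mathbf{u}(t)|\tilde{\mathbf{u}}(t))+(F(t)|\tilde F(t))-(\mathbf{u}_0|\tilde{\mathbf{u}}_0)-(F_0|\tilde F_0)$, while the Laplacian terms produce precisely the dissipation $2\int_0^t[(\nabla\mathbf{u}|\nabla\tilde{\mathbf{u}})+(\nabla F|\nabla\tilde F)]\,d\tau$ on the left of \eqref{eq5.4}.

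Third, I would treat the nonlinear terms. For the velocity convection I use the antisymmetry $\langle\mathbf{u}\cdot\nabla\mathbf{a},\mathbf{b}\rangle=-\langle\mathbf{u}\cdot\nabla\mathbf{b},\mathbf{a}\rangle$, valid since $\text{div}\,\mathbf{u}=\text{div}\,\tilde{\mathbf{u}}=0$; expanding $\mathbf{w}=\mathbf{u}-\tilde{\mathbf{u}}$ and using $\langle\mathbf{u}\cdot\nabla\mathbf{u},\mathbf{u}\rangle=\langle\tilde{\mathbf{u}}\cdot\nabla\mathbf{u},\mathbf{u}\rangle=0$ shows that the two convective cross terms combine to exactly $-\int_0^t\int_{\mathbb{R}^n}\mathbf{w}\cdot\nabla\mathbf{w}\cdot\mathbf{u}\,dx\,d\tau$. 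For the coupling terms I write $\nabla\tilde{\mathbf{u}}=\nabla\mathbf{u}-\nabla\mathbf{w}$ to turn $(F^TF|\nabla\tilde{\mathbf{u}})$ into $(F^TF|\nabla\mathbf{u})-(F^TF|\nabla\mathbf{w})$, supplying the $+\int F^TF:\nabla\mathbf{u}$ term, keep $(\tilde F^T\tilde F|\nabla\mathbf{u})$ as the $+\int\tilde F^T\tilde F:\nabla\mathbf{u}$ term, and then repeatedly apply the matrix identity $AB:C=A:CB^T=B:A^TC$ of Remark 5.1, together with integration by parts, to reorganize the remaining $F$-transport terms $\langle\mathbf{u}\cdot\nabla F,\tilde F\rangle$, $\langle\tilde{\mathbf{u}}\cdot\nabla\tilde F,F\rangle$ and stretching terms $(F\nabla\mathbf{u}|\tilde F)$, $(\tilde F\nabla\tilde{\mathbf{u}}|F)$ into the mixed terms $-\int\nabla\mathbf{u}:(\tilde F^TF+F^T\tilde F)$, $-\int\mathbf{w}\cdot\nabla F:\tilde F$ and $+\int F:\tilde F\nabla\mathbf{w}$ on the right of \eqref{eq5.4}.

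The main obstacle is the rigorous justification of the mutual testing in the first step: because weak solutions possess only spatial $L^2\cap\dot H^1$ regularity and a merely distributional time derivative, one cannot substitute $\tilde{\mathbf{u}}$ for $\mathbf{v}$ outright, and the time mollification must be arranged so that the commutator between mollification and the nonlinear terms vanishes in the limit. This is exactly the point where the Serrin-type condition $\frac{n}{p}+\frac 2q>1$ on the stronger solution $(\mathbf{u},F)$ enters, ensuring each nonlinear space-time integral is finite and the limit passage is valid; by contrast, the subsequent algebraic regrouping of the matrix coupling terms, while lengthy, is entirely routine.
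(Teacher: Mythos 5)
Your overall skeleton---regularize, test each equation against (an approximation of) the other solution, pass to the limit, then regroup using the divergence-free antisymmetry and the matrix identity $AB:C=B:A^{T}C$---is the same as the paper's. But two points differ in substance. First, the analytic engine you invoke for the limit passage is inadequate: Lemma \ref{le2.5} is a product estimate within a single $\dot{B}^{s}_{p,r}$ scale and cannot control the trilinear terms here, where two factors have only the energy regularity $L^{\infty}(0,T;L^{2})\cap L^{2}(0,T;\dot{H}^{1})$ (an $L^{2}$-based scale) while the third lies in $L^{q}(0,T;\dot{B}^{-1+n/p+2/q}_{p,q})$ with possibly large $p$. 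What the paper actually uses is Lemma \ref{le5.4}, the Gallagher--Planchon trilinear estimate \eqref{eq5.5}, applied so that the Besov-regular strong solution always sits in the undifferentiated slot (after an integration by parts using $\text{div}\,\mathbf{u}=0$, cf.\ \eqref{eq5.10} and \eqref{eq5.20}), together with the Banach--Steinhaus theorem to get the uniform $L^{\infty}(0,T;L^{2})$ bound needed for \eqref{eq5.22}. Relatedly, the paper sidesteps what you call the ``main obstacle'' (commutators from mollifying the equation): it never mollifies the solutions inside the nonlinear terms, but instead takes smooth sequences $(\mathbf{u}_{n},F_{n})$, $(\tilde{\mathbf{u}}_{n},\tilde{F}_{n})$ converging in $L^{2}(0,T;\dot{H}^{1})$ (and in the Besov norm for the strong solution, weak-star in $L^{\infty}L^{2}$) and uses them \emph{only as test functions}, so each term converges individually by Lemma \ref{le5.4} and no commutator ever arises. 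Note also that time-only mollification would not yield admissible test functions for Definition \ref{def5.1}, which requires spatial smoothness and compact support as well.

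Second, your reconciliation of the coupling term hides an error: testing the $\mathbf{u}$-equation against $\tilde{\mathbf{u}}$ produces $\int F^{T}F:\nabla\tilde{\mathbf{u}}$, and substituting $\nabla\tilde{\mathbf{u}}=\nabla\mathbf{u}-\nabla\mathbf{w}$ to ``supply'' the term $\int F^{T}F:\nabla\mathbf{u}$ leaves a residual $-\int F^{T}F:\nabla\mathbf{w}$ that appears nowhere in \eqref{eq5.4}, and you never account for it. In fact the correct identity carries $\int F^{T}F:\nabla\tilde{\mathbf{u}}$---this is the form actually used downstream in \eqref{eq5.26}---and the $\nabla\mathbf{u}$ printed in the statement of \eqref{eq5.4} is evidently a typo; forcing the stated form, as you attempted, breaks the bookkeeping. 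Keep the term as $\int F^{T}F:\nabla\tilde{\mathbf{u}}$ and your algebraic regrouping of the transport and stretching terms then closes exactly as in the paper's derivation of \eqref{eq5.25}.
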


We shall use Lemma 1.1 in  \cite{GP02} to prove Lemma \ref{le5.3}.

\begin{lemma}\label{le5.4} {\rm {(\cite{GP02})}}
Let $n\geq 2$, $2\leq p<\infty$ and $2<q<\infty$ such that
$\frac{n}{p}+\frac{2}{q}>1$. Then for every $T>0$, the trilinear
form
\begin{equation*}
  (\mathbf{u},\mathbf{v},\mathbf{w})\in\mathcal{WS}\times\mathcal{WS}\times
  L^{q}(0,T;\dot{B}^{-1+n/p+2/q}_{p,q}(\mathbb{R}^{n}))\mapsto\int_{0}^{T}\int_{\mathbb{R}^{n}}\mathbf{u}\cdot\nabla
  \mathbf{v}\cdot
  \mathbf{w}dxdt
\end{equation*}
is continuous. In particular, the following estimate holds:
\begin{align}\label{eq5.5}
  &\Big|\int_{0}^{T}\int_{\mathbb{R}^{n}}\mathbf{u}\cdot\nabla\mathbf{
  v}\cdot\mathbf{w}dxdt\Big|\nonumber\\&\leq
  C\|\mathbf{u}\|_{L^{\infty}(0,T;L^{2})}^{2/q}\|\nabla
  \mathbf{u}\|_{L^{2}(0,T;L^{2})}^{1-2/q}\|\nabla
  \mathbf{v}\|_{L^{2}(0,T;L^{2})}\|\mathbf{w}\|_{L^{q}(0,T;\dot{B}^{-1+n/p+2/q}_{p,q})}\nonumber\\
  &+\|\nabla
  \mathbf{u}\|_{L^{2}(0,T;L^{2})}\|\mathbf{v}\|_{L^{\infty}(0,T;L^{2})}^{2/q}\|\nabla
  \mathbf{v}\|_{L^{2}(0,T;L^{2})}^{1-2/q}\|\mathbf{w}\|_{L^{q}(0,T;\dot{B}^{-1+n/p+2/q}_{p,q})}\nonumber\\
  &+\|\mathbf{u}\|_{L^{\infty}(0,T;L^{2})}^{1/q}\|\nabla
  \mathbf{u}\|_{L^{2}(0,T;L^{2})}^{1-1/q}\|\mathbf{v}\|_{L^{\infty}(0,T;L^{2})}^{1/q}\|\nabla
  \mathbf{v}\|_{L^{2}(0,T;L^{2})}^{1-1/q}\|\mathbf{w}\|_{L^{q}(0,T;\dot{B}^{-1+n/p+2/q}_{p,q})}.
\end{align}
\end{lemma}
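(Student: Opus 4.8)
The target estimate \eqref{eq5.5} bounds a trilinear form in which $\mathbf{w}$ enters linearly through the space $L^{q}(0,T;\dot B^{s}_{p,q})$ with $s:=-1+n/p+2/q$. The plan is to eliminate $\mathbf{w}$ by duality and reduce everything to a single product estimate. Since $2\le p<\infty$ and $2<q<\infty$, the dual of $\dot B^{s}_{p,q}(\mathbb{R}^{n})$ is $\dot B^{-s}_{p',q'}(\mathbb{R}^{n})$ with $1/p+1/p'=1$ and $1/q+1/q'=1$, so that H\"older in time together with Besov duality in space gives
\begin{equation*}
  \Big|\int_{0}^{T}\!\!\int_{\mathbb{R}^{n}}\mathbf{u}\cdot\nabla\mathbf{v}\cdot\mathbf{w}\,dx\,dt\Big|
  \le \|\mathbf{u}\cdot\nabla\mathbf{v}\|_{L^{q'}(0,T;\dot B^{-s}_{p',q'})}\,
  \|\mathbf{w}\|_{L^{q}(0,T;\dot B^{s}_{p,q})}.
\end{equation*}
Thus it suffices to bound $\mathbf{u}\cdot\nabla\mathbf{v}$ in $L^{q'}(0,T;\dot B^{-s}_{p',q'})$, where $-s=1-n/p-2/q<0$, by the $\mathcal{WS}$ norms of $\mathbf{u}$ and $\mathbf{v}$.

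For this product estimate I would localize in frequency and apply Bony's paraproduct decomposition
\begin{equation*}
  \mathbf{u}\cdot\nabla\mathbf{v}=T_{\mathbf{u}}(\nabla\mathbf{v})+T_{\nabla\mathbf{v}}(\mathbf{u})+R(\mathbf{u},\nabla\mathbf{v}),
\end{equation*}
estimating each dyadic block $\Delta_{j}$ with the Bernstein inequality (Lemma \ref{le2.2}) to convert derivatives into powers of $2^{j}$ and to trade Lebesgue integrability, in the same spirit as the product rule of Lemma \ref{le2.5}. The new ingredient, which distinguishes this from Lemma \ref{le2.5}, is the interpolation inequality for functions in $\mathcal{WS}$,
\begin{equation*}
  \|g\|_{L^{2/\theta}(0,T;\dot H^{\theta})}\le
  C\,\|g\|_{L^{\infty}(0,T;L^{2})}^{1-\theta}\,\|\nabla g\|_{L^{2}(0,T;L^{2})}^{\theta},
  \qquad \theta\in[0,1],
\end{equation*}
which is precisely where the combined $L^{\infty}L^{2}\cap L^{2}\dot H^{1}$ structure of $\mathcal{WS}$ enters. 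The exponents $2/q,\,1-2/q$ appearing in the first two terms of \eqref{eq5.5} are the values $1-\theta,\,\theta$ for $\theta=1-2/q$, while $1/q,\,1-1/q$ in the last term correspond to $\theta=1-1/q$; these two choices reflect the two ways the gradient and the fractional smoothness are distributed among the factors in the three paraproduct pieces.

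To assemble the bound, each paraproduct piece is controlled by choosing where to place the full gradient (on $\mathbf{u}$ or on $\mathbf{v}$) and how much fractional Sobolev smoothness to assign to the other factor, followed by H\"older in $x$ (using $\dot H^{\theta}\hookrightarrow L^{r}$) and H\"older in $t$. The temporal exponents are arranged to sum to $1/q'$: for instance, in the first term one checks $\frac{q-2}{2q}+\frac12+\frac1q=1$, so the time integrals close. Matching the frequency weights of $\dot B^{-s}_{p',q'}$ then reproduces exactly the three summands on the right-hand side of \eqref{eq5.5}.

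The delicate point is the remainder term $R(\mathbf{u},\nabla\mathbf{v})$ (the high-high interaction), whose dyadic series converges into a negative-index space only when the total regularity budget is strictly positive. The hypothesis $\frac np+\frac2q>1$ is exactly the statement $s>0$, which forces the sum of the smoothness indices assigned to $\mathbf{u}$, to $\nabla\mathbf{v}$, and to the output to be positive, so that the low-frequency summation in $R$ is finite; without it this summation diverges. Verifying that this single scaling condition simultaneously closes all three paraproduct pieces together with the corresponding time-H\"older exponents is the main bookkeeping obstacle, while the remaining manipulations are routine Littlewood--Paley estimates. Alternatively, one may simply invoke Lemma~1.1 of \cite{GP02}, from which \eqref{eq5.5} is quoted.
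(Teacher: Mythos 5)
Your proposal is correct and in substance coincides with the paper's treatment: the paper gives no proof of Lemma \ref{le5.4} at all, quoting it directly as Lemma 1.1 of \cite{GP02} (Remark 5.2 only observes that the divergence-free condition is not used there), which is precisely your closing fallback. Your sketch --- duality between $\dot{B}^{s}_{p,q}$ and $\dot{B}^{-s}_{p',q'}$, Bony decomposition of $\mathbf{u}\cdot\nabla\mathbf{v}$, and the interpolation $\|g\|_{L^{2/\theta}(0,T;\dot{H}^{\theta})}\leq C\|g\|_{L^{\infty}(0,T;L^{2})}^{1-\theta}\|\nabla g\|_{L^{2}(0,T;L^{2})}^{\theta}$ with $\theta\in\{1-2/q,\,1-1/q\}$, the time exponents summing to $1/q'$, and the remainder closing exactly under $s=-1+n/p+2/q>0$ --- is a faithful outline of the Gallagher--Planchon argument being cited.
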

\noindent\textit{Remark 5.2.} Checking the proof of Lemma 1.1 in
\cite{GP02} in detail, we find that the special structure of the
Navier-Stokes equations ($\text{div } \mathbf{u}=0$) was not used,
and \eqref{eq5.5} holds in both scalar and vector cases.

\textbf{The proof of Lemma \ref{le5.3}.}

Let us consider two smooth sequences of
$(\{\tilde{\mathbf{u}}_{n}\}, \{\tilde{F}_{n}\})$ ($\text{div
}\tilde{\mathbf{u}}_{n}=0$) and $(\{\mathbf{u}_{n}\}, \{F_{n}\})$
($\text{div }\mathbf{u}_{n}=0$) such that
\begin{equation*}
\begin{cases}
  \lim_{n\rightarrow\infty}(\tilde{\mathbf{u}}_{n},
  \tilde{F}_{n})=(\tilde{\mathbf{u}},\tilde{F})\ \ \text{in}\ \
  L^{2}(0,T; \dot{H}^{1}(\mathbb{R}^{n})),\\
  \lim_{n\rightarrow\infty}(\tilde{\mathbf{u}}_{n},
  \tilde{F}_{n})=(\tilde{\mathbf{u}},\tilde{F})\ \ \text{weakly-star in}\ \
  L^{\infty}(0,T; L^{2}(\mathbb{R}^{n})),
\end{cases}
\end{equation*}
and
\begin{equation*}
\begin{cases}
  \lim_{n\rightarrow\infty}(\mathbf{u}_{n},
  F_{n})=(\mathbf{u},F)\ \ \text{in}\ \
  L^{2}(0,T; \dot{H}^{1}(\mathbb{R}^{n}))\cap
  L^{q}(0,T;\dot{B}^{-1+n/p+2/q}_{p,q}(\mathbb{R}^{n})),\\
  \lim_{n\rightarrow\infty}(\mathbf{u}_{n},
  F_{n})=(\mathbf{u},F)\ \ \text{weakly-star in}\ \
  L^{\infty}(0,T; L^{2}(\mathbb{R}^{n})).
\end{cases}
\end{equation*}
We split the proof into the following two steps.

\textbf{Step 1.} Taking the scalar product with
$\tilde{\mathbf{u}}_{n}$ and $\mathbf{u}_{n}$ of the equation
\eqref{eq1.5} on $\mathbf{u}$ and $\tilde{\mathbf{u}}$ respectively,
after integration in time and integration by parts in the space
variables, we get
\begin{equation}\label{eq5.6}
  \int_{0}^{t}\Big((\partial_{\tau}\mathbf{u}| \tilde{\mathbf{u}}_{n})+(\nabla\mathbf{u}| \nabla\tilde{\mathbf{u}}_{n})
  +(\mathbf{u}\cdot\nabla\mathbf{u}|\tilde{\mathbf{u}}_{n})+(\diva(F^{T}F)|
  \tilde{\mathbf{u}}_{n})\Big)d\tau=0
\end{equation}
and
\begin{equation}\label{eq5.7}
  \int_{0}^{t}\Big((\partial_{\tau}\tilde{\mathbf{u}}| \mathbf{u}_{n})+(\nabla\tilde{\mathbf{u}}| \nabla\mathbf{u}_{n})
  +(\tilde{\mathbf{u}}\cdot\nabla\tilde{\mathbf{u}}|\mathbf{u}_{n})+(\diva(\tilde{F}^{T}\tilde{F})|
  \mathbf{u}_{n})\Big)d\tau=0.
\end{equation}
Note that we have assumed that both $\nabla\mathbf{u}_{n}$ and
$\nabla\tilde{\mathbf{u}}_{n}$ converge in
$L^{2}(0,T;L^{2}(\mathbb{R}^{n}))$ towards $\nabla\mathbf{u}$ and
$\nabla\tilde{\mathbf{u}}$ respectively, it is obvious that
\begin{equation}\label{eq5.8}
  \lim_{n\rightarrow\infty}\Big(\int_{0}^{t}(\nabla\mathbf{u}| \nabla\tilde{\mathbf{u}}_{n})d\tau+\int_{0}^{t}(\nabla\tilde{\mathbf{u}}| \nabla\mathbf{u}_{n})
  d\tau\Big)=2\int_{0}^{t}(\nabla\mathbf{u}|
  \nabla\tilde{\mathbf{u}})d\tau.
\end{equation}
Since $\mathbf{u}_{n}$ converges to $\mathbf{u}$ in
$L^{q}(0,T;\dot{B}^{-1+n/p+2/q}_{p,q}(\mathbb{R}^{n}))$, by Lemma
\ref{le5.4}, one obtains that
\begin{equation}\label{eq5.9}
  \lim_{n\rightarrow\infty}\int_{0}^{t}(\tilde{\mathbf{u}}\cdot\nabla\tilde{\mathbf{u}}|\mathbf{u}_{n})d\tau
  =\int_{0}^{t}(\tilde{\mathbf{u}}\cdot\nabla\tilde{\mathbf{u}}|\mathbf{u})d\tau.
\end{equation}
Due to the fact $\text{div }\mathbf{u}=0$, this yields, by Lemma
\ref{le5.4} again,
\begin{equation}\label{eq5.10}
  \lim_{n\rightarrow\infty}\int_{0}^{t}(\mathbf{u}\cdot\nabla\mathbf{u}|\tilde{\mathbf{u}}_{n})d\tau
  =-\lim_{n\rightarrow\infty}\int_{0}^{t}(\mathbf{u}\cdot\nabla\tilde{\mathbf{u}}_{n}|\mathbf{u})d\tau
  =-\int_{0}^{t}(\mathbf{u}\cdot\nabla\tilde{\mathbf{u}}|\mathbf{u})d\tau.
\end{equation}
Applying \eqref{eq5.5}, it is also obvious that the following two
results hold:
\begin{align}\label{eq5.11}
  \lim_{n\rightarrow\infty}&\int_{0}^{t}(\diva(F^{T}F)|
  \tilde{\mathbf{u}}_{n})d\tau=-\lim_{n\rightarrow\infty}\int_{0}^{t}(F^{T}F|
  \nabla\tilde{\mathbf{u}}_{n})d\tau\nonumber\\
  &=-\int_{0}^{t}(F^{T}F| \nabla\tilde{\mathbf{u}})d\tau=\int_{0}^{t}(\diva(F^{T}F)|\tilde{\mathbf{u}})d\tau.
\end{align}
and
\begin{align}\label{eq5.12}
  \lim_{n\rightarrow\infty}&\int_{0}^{t}(\diva(\tilde{F}^{T}\tilde{F})\big|
  \mathbf{u}_{n})d\tau
  =\lim_{n\rightarrow\infty}\int_{0}^{t}\Big(\sum_{i=1}^{n}(\partial_{x_{i}}\tilde{F}^{T}\tilde{F}+\tilde{F}^{T}\partial_{x_{i}}\tilde{F})|
  \mathbf{u}_{n}\Big)d\tau\nonumber\\
  &=\int_{0}^{t}\Big(\sum_{i=1}^{n}(\partial_{x_{i}}\tilde{F}^{T}\tilde{F}+\tilde{F}^{T}\partial_{x_{i}}\tilde{F})\big|
  \mathbf{u}\Big)d\tau=\int_{0}^{t}(\diva(\tilde{F}^{T}\tilde{F})|
  \mathbf{u})d\tau.
\end{align}
Since $\partial_{t}\tilde{\mathbf{u}}=\Delta\tilde{\mathbf{u}}
-\mathbb{P}\tilde{\mathbf{u}}\cdot\nabla\tilde{\mathbf{u}}-\mathbb{P}\diva(\tilde{F}^{T}\tilde{F})$
holds in the sense of distribution, the estimates
\eqref{eq5.8}--\eqref{eq5.12} imply in particular that
\begin{align*}
  \lim_{n\rightarrow\infty}\int_{0}^{t}(\partial_{\tau}\tilde{\mathbf{u}}|
  \mathbf{u}_{n})d\tau&=-\lim_{n\rightarrow\infty}\int_{0}^{t}\Big((\nabla\tilde{\mathbf{u}}| \nabla\mathbf{u}_{n})
  +(\tilde{\mathbf{u}}\cdot\nabla\tilde{\mathbf{u}}|\mathbf{u}_{n})+(\diva(\tilde{F}^{T}\tilde{F})|
  \mathbf{u}_{n})\Big)d\tau\nonumber\\&=-\int_{0}^{t}\Big((\nabla\tilde{\mathbf{u}}|\nabla\mathbf{u})
  +(\tilde{\mathbf{u}}\cdot\nabla\tilde{\mathbf{u}}|\mathbf{u})+(\diva(\tilde{F}^{T}\tilde{F})|
  \mathbf{u})\Big)d\tau\nonumber\\&=\int_{0}^{t}(\partial_{\tau}\tilde{\mathbf{u}}|
  \mathbf{u})d\tau.
\end{align*}
Similarly, we have
\begin{align*}
  \lim_{n\rightarrow\infty}\int_{0}^{t}(\partial_{\tau}\mathbf{u}| \tilde{\mathbf{u}}_{n})d\tau
  =\int_{0}^{t}(\partial_{\tau}\mathbf{u}| \tilde{\mathbf{u}})d\tau.
\end{align*}
Putting these estimates together, and noticing that
\begin{align}\label{eq5.13}
 \int_{0}^{t}(\partial_{\tau}\tilde{\mathbf{u}}|
  \mathbf{u})+(\partial_{\tau}\mathbf{u}| \tilde{\mathbf{u}})d\tau=(\mathbf{u}(t)|
  \tilde{\mathbf{u}}(t))-(\mathbf{u}_{0}|
  \tilde{\mathbf{u}}_{0})
\end{align}
and
\begin{align}\label{eq5.14}
 \int_{0}^{t}\Big((\tilde{\mathbf{u}}\cdot\nabla\tilde{\mathbf{u}}|\mathbf{u})-(\mathbf{u}\cdot\nabla\tilde{\mathbf{u}}|\mathbf{u})\Big)d\tau
 =\int_{0}^{t}(\mathbf{w}\cdot\nabla\mathbf{w}| \mathbf{u})d\tau,
\end{align}
we find
\begin{align}\label{eq5.15}
  (\mathbf{u}(t)|
  \tilde{\mathbf{u}}(t))+&2\int_{0}^{t}(\nabla\mathbf{u}|
  \nabla\tilde{\mathbf{u}})d\tau=(\mathbf{u}_{0}|
  \tilde{\mathbf{u}}_{0})-\int_{0}^{t}(\mathbf{w}\cdot\nabla\mathbf{w}| \mathbf{u})d\tau\nonumber\\
  &+\int_{0}^{t}\int_{\mathbb{R}^{n}}\tilde{F}^{T}\tilde{F}:\nabla\mathbf{u}dxd\tau
  +\int_{0}^{t}\int_{\mathbb{R}^{n}}F^{T}F:\nabla\mathbf{u}dxd\tau.
\end{align}

\textbf{Step 2.} In this step we derive the estimate for $F$ and
$\tilde{F}$. Proceeding the same way as \eqref{eq5.6} and
\eqref{eq5.7}, we obtain that
\begin{equation}\label{eq5.16}
  \int_{0}^{t}\Big((\partial_{\tau}F| \tilde{F}_{n})+(\nabla F|\nabla\tilde{F}_{n})
  +(\mathbf{u}\cdot\nabla F|\tilde{F}_{n})+(F\nabla\mathbf{u}|
  \tilde{F}_{n})\Big)d\tau=0
\end{equation}
and
\begin{equation}\label{eq5.17}
  \int_{0}^{t}\Big((\partial_{\tau}\tilde{F}| F_{n})+(\nabla\tilde{F}| \nabla F_{n})
  +(\tilde{\mathbf{u}}\cdot\nabla\tilde{F}|F_{n})+(\tilde{F}\nabla\tilde{\mathbf{u}}|
  F_{n})\Big)d\tau=0.
\end{equation}
Since we have assumed that both $\nabla F_{n}$ and
$\nabla\tilde{F}_{n}$ respectively converge to $\nabla F$ and
$\nabla\tilde{F}$ in $L^{2}(0,T;L^{2}(\mathbb{R}^{n}))$ and $F_{n}$
converge to $F$ in
$L^{q}(0,T;\dot{B}^{-1+n/p+2/q}_{p,q}(\mathbb{R}^{n}))$. Then by
using  \eqref{eq5.5} and the fact $\text{div }\mathbf{u}=0$, it is
clear that
\begin{equation}\label{eq5.18}
  \lim_{n\rightarrow\infty}\Big(\int_{0}^{t}(\nabla F| \nabla\tilde{F}_{n})d\tau+\int_{0}^{t}(\nabla\tilde{F}| \nabla F_{n})
  d\tau\Big)=2\int_{0}^{t}(\nabla F|
  \nabla\tilde{F})d\tau,
\end{equation}
\begin{equation}\label{eq5.19}
  \lim_{n\rightarrow\infty}\int_{0}^{t}(\tilde{\mathbf{u}}\cdot\nabla\tilde{F}|F_{n})d\tau
  =\int_{0}^{t}(\tilde{\mathbf{u}}\cdot\nabla\tilde{F}|F)d\tau,
\end{equation}
\begin{equation}\label{eq5.20}
  \lim_{n\rightarrow\infty}\int_{0}^{t}(\mathbf{u}\cdot\nabla F|\tilde{F}_{n})d\tau
  =-\lim_{n\rightarrow\infty}\int_{0}^{t}(\mathbf{u}\cdot\nabla\tilde{F}_{n}|F)d\tau
  =-\int_{0}^{t}(\mathbf{u}\cdot\nabla\tilde{F}|F)d\tau,
\end{equation}
\begin{equation}\label{eq5.21}
  \lim_{n\rightarrow\infty}\int_{0}^{t}(\tilde{F}\nabla\tilde{\mathbf{u}}|
  F_{n})d\tau
  =\int_{0}^{t}(\tilde{F}\nabla\tilde{\mathbf{u}}|
  F)d\tau,
\end{equation}
and
\begin{equation}\label{eq5.22}
  \lim_{n\rightarrow\infty}\int_{0}^{t}(F\nabla\mathbf{u}|
  \tilde{F}_{n})d\tau
  =\int_{0}^{t}(F\nabla\mathbf{u}|
  \tilde{F})d\tau.
\end{equation}
The last identity holds because $\nabla\tilde{F}_{n}$ converge to
$\nabla\tilde{F}$ in $L^{2}(0,T; L^{2}(\mathbb{R}^{n}))$ and
$\{\tilde{F}_{n}\}$ is bounded in $L^{\infty}(0,T;
L^{2}(\mathbb{R}^{n}))$, which was ensured by the Banach-Steinhaus
theorem due to $\tilde{F}_{n}$ weakly-star converge to $\tilde{F}$
in $L^{\infty}(0,T; L^{2}(\mathbb{R}^{n}))$. Hence, combining the
estimates \eqref{eq5.18}--\eqref{eq5.22}, as derivation of estimate
\eqref{eq5.13} and \eqref{eq5.14}, we have
\begin{align}\label{eq5.23}
  \lim_{n\rightarrow\infty}\int_{0}^{t}((\partial_{\tau}F| \tilde{F}_{n})d\tau=\int_{0}^{t}(\partial_{\tau}F| \tilde{F})d\tau
\end{align}
and
\begin{align}\label{eq5.24}
  \lim_{n\rightarrow\infty}\int_{0}^{t}(\partial_{\tau}\tilde{F}| F_{n})d\tau
  =\int_{0}^{t}(\partial_{\tau}\tilde{F}| F)d\tau.
\end{align}
It is obvious that
\begin{align*}
  \int_{0}^{t}(\partial_{\tau}F| \tilde{F})+(\partial_{\tau}\tilde{F}| F)d\tau=(F(t)|
  \tilde{F}(t))-(F_{0}|
  \tilde{F}_{0}).
\end{align*}
Moreover, since $\text{div }\mathbf{u}=0$, we have
\begin{align*}
  \int_{0}^{t}\int_{\mathbb{R}^{n}}\Big(\tilde{\mathbf{u}}\cdot\nabla\tilde{F}:F+\tilde{\mathbf{u}}\cdot\nabla
  F:\tilde{F}\Big)dxd\tau=\int_{0}^{t}\int_{\mathbb{R}^{n}}\tilde{\mathbf{u}}\cdot\nabla(\tilde{F}:F)dxd\tau=0
\end{align*}
and
\begin{align*}
 \tilde{F}\nabla\mathbf{u}:F+\tilde{F}:F\nabla\mathbf{u}=\nabla\mathbf{u}:(\tilde{F}^{T}F+F^{T}\tilde{F}).
\end{align*}
These two facts imply that
\begin{align*}
  \int_{0}^{t}\int_{\mathbb{R}^{n}}&\Big(\mathbf{u}\cdot\nabla F:\tilde{F}
  +\tilde{\mathbf{u}}\cdot\nabla\tilde{F}:F
  +F\nabla\mathbf{u}:\tilde{F}
  +\tilde{F}\nabla\tilde{\mathbf{u}}:F\Big)dxd\tau\\
  =&\int_{0}^{t}\int_{\mathbb{R}^{n}}\Big(\nabla\mathbf{u}:(\tilde{F}^{T}F+F^{T}\tilde{F})+(\mathbf{u}-\tilde{\mathbf{u}})\cdot\nabla
  F:\tilde{F}-F:\tilde{F}\nabla(\mathbf{u}-\tilde{\mathbf{u}})
  \Big)dxd\tau.
\end{align*}
Finally, putting all above estimates together yield
\begin{align}\label{eq5.25}
  (F(t)&| \tilde{F}(t))+2\int_{0}^{t}(\nabla F|
  \nabla\tilde{F})d\tau=(F_{0}|
  \tilde{F}_{0})\nonumber\\&-\int_{0}^{t}\int_{\mathbb{R}^{n}}\Big(\nabla\mathbf{u}:(\tilde{F}^{T}F+F^{T}\tilde{F})
  +\mathbf{w}\cdot\nabla
  F:\tilde{F}-F:\tilde{F}\nabla\mathbf{w}
  \Big)dxd\tau.
\end{align}
It is clear that \eqref{eq5.4} follows from \eqref{eq5.15} and
\eqref{eq5.25}. This completes the proof of Lemma \ref{le5.3}.

\textbf{The proof of Proposition \ref{pro5.2}.}

Note that $\int_{0}^{t}\int_{\mathbb{R}^{n}}\mathbf{w}\cdot\nabla
F:Fdxd\tau=0$ since $\text{div }\mathbf{w}=0$, then by Lemma
\ref{le5.3}, we get
\begin{align}\label{eq5.26}
  \|(\mathbf{w}(t)&, E(t))\|_{L^{2}}^{2}+2\int^{t}_{0}\|(\nabla\mathbf{w}(\tau), \nabla
  E(\tau))\|_{L^{2}}^{2}d\tau
  \leq\|(\mathbf{u}_{0}, F_{0})\|_{L^{2}}^{2}+\|(\tilde{\mathbf{u}}_{0},
  \tilde{F}_{0})\|_{L^{2}}^{2}\nonumber\\&-2(\mathbf{u}_{0}|\tilde{\mathbf{u}}_{0})-2(F_{0}|\tilde{F}_{0})
  +2\int_{0}^{t}\int_{\mathbb{R}^{n}}\mathbf{w}\cdot\nabla\mathbf{w}\cdot\mathbf{u}dxd\tau
  -2\int_{0}^{t}\int_{\mathbb{R}^{n}}\tilde{F}^{T}\tilde{F}:\nabla\mathbf{u}dxd\tau\nonumber\\
  &-2\int_{0}^{t}\int_{\mathbb{R}^{n}}F^{T}F:\nabla\tilde{\mathbf{u}}dxd\tau
  +2\int_{0}^{t}\int_{\mathbb{R}^{n}}\nabla\mathbf{u}:(\tilde{F}^{T}F+F^{T}\tilde{F})dxd\tau\nonumber\\
  &+2\int_{0}^{t}\int_{\mathbb{R}^{n}}\mathbf{w}\cdot\nabla F:\tilde{F}dxd\tau
  -2\int_{0}^{t}\int_{\mathbb{R}^{n}}F:\tilde{F}\nabla\mathbf{w}dxd\tau\nonumber\\
  &\leq\|(\mathbf{w}_{0}, E_{0})\|_{L^{2}}^{2}+2\int_{0}^{t}\int_{\mathbb{R}^{n}}\mathbf{w}\cdot\nabla\mathbf{w}\cdot\mathbf{u}dxd\tau
  -2\int_{0}^{t}\int_{\mathbb{R}^{n}}E^{T}E:\nabla\mathbf{u}dxd\tau\nonumber\\
  &-2\int_{0}^{t}\int_{\mathbb{R}^{n}}\mathbf{w}\cdot\nabla F:Edxd\tau
  +2\int_{0}^{t}\int_{\mathbb{R}^{n}}E^{T}F:\nabla\mathbf{w}dxd\tau.
\end{align}
Using the similar argument as the proof of Lemma \ref{le5.4} (see
\cite{GP02}), we obtain that
\begin{align}\label{eq5.27}
  \Big|\int_{0}^{t}\int_{\mathbb{R}^{n}}\mathbf{w}\cdot\nabla\mathbf{w}\cdot&\mathbf{u}dxd\tau\Big|
  \leq C\int_{0}^{t}\|\mathbf{w}\|_{L^{2}}^{2/q}\|\nabla
  \mathbf{w}\|_{L^{2}}^{2-2/q}\|\mathbf{u}\|_{\dot{B}^{-1+n/p+2/q}_{p,q}}d\tau\nonumber\\
  &\leq\frac{1}{2}\int^{t}_{0}\|\nabla\mathbf{w}\|_{L^{2}}^{2}d\tau
  +C\int_{0}^{t}\|\mathbf{w}\|_{L^{2}}^{2}
  \|\mathbf{u}\|_{\dot{B}^{-1+n/p+2/q}_{p,q}}^{q}d\tau;
\end{align}
\begin{align}\label{eq5.28}
  \Big|\int_{0}^{t}\int_{\mathbb{R}^{n}}E^{T}E:&\nabla\mathbf{u}dxd\tau\Big|
  =\Big|-\int_{0}^{t}\int_{\mathbb{R}^{n}}\diva(E^{T}E)\cdot\mathbf{u}dxd\tau\Big|\nonumber\\
  &\leq C\int_{0}^{t}\|E\|_{L^{2}}^{2/q}\|\nabla
  E\|_{L^{2}}^{2-2/q}\|\mathbf{u}\|_{\dot{B}^{-1+n/p+2/q}_{p,q}}d\tau\nonumber\\
  &\leq\frac{1}{2}\int^{t}_{0}\|\nabla E\|_{L^{2}}^{2}d\tau
  +C\int_{0}^{t}\|E\|_{L^{2}}^{2}\|\mathbf{u}\|_{\dot{B}^{-1+n/p+2/q}_{p,q}}^{q}d\tau;
\end{align}
\begin{align}\label{eq5.29}
  \Big|\int_{0}^{t}\int_{\mathbb{R}^{n}}\mathbf{w}\cdot\nabla &F:Edxd\tau\Big|
  =\Big|\int_{0}^{t}\int_{\mathbb{R}^{n}}(\mathbf{w}\otimes F)\cdot\nabla
  Edxd\tau\Big|\nonumber\\
  &\leq\frac{1}{2}\int^{t}_{0}\|(\nabla\mathbf{w},\nabla E)\|_{L^{2}}^{2}d\tau
  +C\int_{0}^{t}\|(\mathbf{w}, E)\|_{L^{2}}^{2}\|F\|_{\dot{B}^{-1+n/p+2/q}_{p,q}}^{q}d\tau;
\end{align}
\begin{align}\label{eq5.30}
  \Big|\int_{0}^{t}\int_{\mathbb{R}^{n}}E^{T}F:\nabla\mathbf{w}dxd\tau\Big|
  &\leq\frac{1}{2}\int^{t}_{0}\|(\nabla\mathbf{w},\nabla E)\|_{L^{2}}^{2}d\tau\nonumber\\
  &+C\int_{0}^{t}\|(\mathbf{w}, E)\|_{L^{2}}^{2}\|F\|_{\dot{B}^{-1+n/p+2/q}_{p,q}}^{q}d\tau.
\end{align}
where $\otimes$ denotes the tensor product. Returning back to the
estimate \eqref{eq5.26} and putting \eqref{eq5.27}--\eqref{eq5.30}
together yield that
\begin{align}\label{eq5.31}
  \|(\mathbf{w}(t),&E(t))\|_{L^{2}}^{2}+2\int^{t}_{0}\|(\nabla\mathbf{w}(\tau), \nabla
  E(\tau))\|_{L^{2}}^{2}d\tau \leq\|(\mathbf{w}_{0}, E_{0})\|_{L^{2}}^{2}\nonumber\\
  &+C\int_{0}^{t}(\|\mathbf{w}\|_{L^{2}}^{2}
  +\|E\|_{L^{2}}^{2})(\|\mathbf{u}\|_{\dot{B}^{-1+n/p+2/q}_{p,q}}^{q}+\|F\|_{\dot{B}^{-1+n/p+2/q}_{p,q}}^{q})d\tau.
\end{align}
This estimate together with Gronwall's inequality yield the desired
estimate \eqref{eq5.3} immediately.  We complete the proof of
Proposition \ref{pro5.2}. $\hfill\Box$


\begin{thebibliography}{99}
\small





\bibitem{C98} J.-Y. Chemin  \textit{Perfect Incompressible Fluids}, Oxford Lecture Series
in Mathematics and its Applications, vol. 14. The Clarendon Press,
Oxford University Press: New York, 1998.

\bibitem{D05} R. Dachin, \textit{Fourier Analysis Methods for PDE's}, 2005,
http://perso-math.univ-mlv.fr/users/danchin.raphael/courschine.pdf.

\bibitem{E61} J.L. Ericksen, Conservation laws for liquid crystals, Trans. Soc.
Rhe., 5 (1961) 23--34.

\bibitem{E87} J.L. Ericksen, Continuum theory of nematic liquid crystals, Res.
Mechanica, 21 (1987) 381--392.


\bibitem{GP02} I. Gallagher and F. Planchon, On global infinite
energy solutions to the Navier-Stokes equations in two dimensions,
Arch. Rational Mech. Anal., 161 (2002) 307--337.

\bibitem{HK87} R. Hardt and D. Kinderlehrer, \textit{Mathematical Questions of Liquid Crystal Theory}. The IMA Volumes in
Mathematics andits Applications 5, New York: Springer-Verlag, 1987.

\bibitem{H11}  M. Hong, Global existence of solutions of the simplied
Erichsen-Leslie system in dimension two, Calc. Var., 40 (2011)
15--36.

\bibitem{HW10} X. Hu and D. Wang, Global solution to the
three-dimensional incompressible flow of liquid crystals, Commun.
Math. Phys., 296 (2010) 861--880.



\bibitem{L02} P.-G. Lemari\'{e}-Rieusset,
\textit{Recent Developments in the Navier-Stokes Problem}, Research
Notes in Mathematics, Chapman \& Hall/CRC, 2002.


\bibitem{L68} F. Leslie, Some constitutive equations for liquid crystals,
Arch. Rational Mech. Anal., 28 (1968) 265--283.

\bibitem{L79} F. Leslie,
Theory of flow phenomenum in liquid crystals. In: The Theory of
Liquid Crystals, London-New York: Academic Press, 4 (1979) 1--81.

\bibitem{L89} F. Lin, Nonlinear theory of defects in nematic liquid crystals; phase transition and flow phenomena,
Comm. Pure Appl. Math., 42 (1989) 789--814.

\bibitem{LLW10} F. Lin, J. Lin and C. Wang, Liquid crystal fFlows in two dimensions,
Arch. Rational Mech. Anal., 197 (2010) 297--336.

\bibitem{LL95} F. Lin and C. Liu, Nonparabolic dissipative systems modeling the flow of liquid crystals,
Comm. Pure Appl. Math.,  48 (1995) 501--537.

\bibitem{LL96} F. Lin and C. Liu, Partial regularities of the nonlinear dissipative systems modeling
the flow of liquid crystals, Discrete. Contin. Dyn. Syst., 2 (1996)
1--23.

\bibitem{LW08} C. Liu, H. Wu, Long-time behavior for nonlinear
hydrodynamic system modeling the nematic liquid crystal flows,
preprint, 2008.

\bibitem{LWX09} C. Liu, H. Wu and X. Xu, Asymptotic behavior of a
hydrodynamic system in the nematic liquid crystal flows,
arXiv:0901.1751v1.


\bibitem{RS96} T. Runst and W. Sickel, \textit{Sobolev Spaces of Fractional Order, Nemytskij Operators, and Nonlinear Partial Differential
Equations}, de Gruyter Series in Nonlinear Analysis and
Applications, vol. 3. Walter de Gruyter \& Co.: Berlin, 1996.

\bibitem{SL09} H. Sun and C. Liu, On energetic variational approaches in modeling the nematic liquid crystal
flows, Discrete Contin. Dyn. Syst.,  23 (2009) 455--475.



\end{thebibliography}
\end{document}